\theoremstyle{plain}
\newtheorem{theorem}{\bf{\textsc{Theorem}}}[section]
\newtheorem*{theoremnonum}{\bf{\textsc{Theorem}}}
\newtheorem*{cornonum}{\bf{\textsc{Corollary}}}
\newtheorem{corollary}[theorem]{\bf{\textsc{Corollary}}}
\newtheorem{lemma}[theorem]{\bf{\textsc{Lemma}}}
\newtheorem{proposition}[theorem]{\bf{\textsc{Proposition}}}
\newtheorem*{conjecture}{\bf\textsc{Conjecture}}
\numberwithin{equation}{section}
\theoremstyle{definition}
\newtheorem{remark}[theorem]{\bf{\textsc{Remark}}}
\newcommand{\BB}{\mathbb{B}}
\newcommand{\CC}{\mathbb{C}}
\newcommand{\DD}{\mathbb{D}}
\newcommand{\LL}{\mathbb{L}}
\newcommand{\NN}{\mathbb{N}}
\newcommand{\ZZ}{\mathbb{Z}}
\newcommand{\sE}{\mathcal{E}}
\newcommand{\sL}{\mathcal{L}}
\newcommand{\sR}{\mathcal{R}}
\newcommand{\sS}{\mathcal S}
\newcommand{\sU}{\mathcal U}
\newcommand{\sV}{\mathcal{V}}
\newcommand{\sW}{\mathcal{W}}
\newcommand{\fraka}{{\mathfrak a}}
\newcommand{\frakg}{{\mathfrak g}}
\newcommand{\frakh}{{\mathfrak h}}
\newcommand{\frakk}{{\mathfrak k}}
\newcommand{\frakn}{{\mathfrak n}}
\newcommand{\frakp}{{\mathfrak p}}
\newcommand{\frakq}{{\mathfrak q}}
\newcommand{\frakz}{{\mathfrak z}}
\DeclareMathOperator{\Poly}{Pol}
\DeclareMathOperator{\End}{End}
\DeclareMathOperator{\Aut}{Aut}
\DeclareMathOperator{\Sym}{Sym}
\DeclareMathOperator{\Op}{Op}
\DeclareMathOperator{\sgn}{sgn}
\DeclareMathOperator{\diag}{diag}
\newcommand{\SymGp}{\mathfrak{S}}
\newcommand{\CR}{{D}}
\newcommand{\CRb}{{\overline D}{}}
\newcommand{\fac}{{\mbox{\tiny fac}}}
\newcommand{\topterm}{{\mbox{\tiny top}}}
\renewcommand{\b}[1]{{\bf #1}}
\newcommand{\TP}[3]{\langle#1#2#3\rangle}
\newcommand{\set}[2]{\{#1\,|\,#2\}}
\title{Higher Laplacians on pseudo-Hermitian symmetric spaces}
\author{Benjamin Schwarz} 
\subjclass[2010]{Primary 32A50; Secondary 53C35, 32M15, 22E46.}
\date{\today}
\address{Benjamin Schwarz, Universit\"{a}t Paderborn,
Fakult\"{a}t f\"{u}r Elektrotechnik, Informatik und Mathematik,
Institut f\"{u}r Mathematik, Warburger Str. 100,
33098 Paderborn, Germany}
\email{bschwarz@math.upb.de}
\begin{document}
%--------------------------------------------------------------------------------------------------
\begin{abstract}
Let $X=G/H$ be a symmetric space for a real simple Lie group $G$, equipped with a $G$-invariant complex structure. Then, $X$ is a pseudo-Hermitian manifold, and in this geometric setting, higher Laplacians $L_m$ are defined for each positive integer $m$, which generalize the ordinary Laplace-Beltrami operator. We show that $L_1,L_3,\ldots, L_{2r-1}$ form a set of algebraically independent generators for the algebra $\DD_G(X)$ of $G$-invariant differential operators on $X$, where $r$ denotes the rank of $X$. This confirms a conjecture of Engli\v{s} and Peetre, originally stated for the class of Hermitian symmetric spaces.
\end{abstract}

%--------------------------------------------------------------------------------------------------

\maketitle

%==================================================================================================
\section*{Introduction}
Let $X$ be a semisimple symmetric space, i.e., $X=G/H$ for a connected real semisimple Lie group $G$ and closed subgroup $H\subseteq G$ satisfying
\[
	G^\sigma_0\subseteq H\subseteq G^\sigma
\]
for some involution $\sigma$ on $G$, where $G^\sigma$ denotes the fixed points of $\sigma$, and $G^\sigma_0$ is the identity component of $G^\sigma$. One of the most fundamental results concerning harmonic analysis on symmetric spaces states, that the algebra $\DD_G(X)$ of $G$-invariant differential operators on $X$ is a polynomial algebra with $r$ generators, where $r$ is the rank of the symmetric space.

One of these generators can be chosen to be the Laplace--Beltrami operator, whose definition is based on the rich geometric structure underlying the symmetric space. In fact, $X$ can equivalently be defined as a connected pseudo-Riemannian manifold with the property, that the geodesic reflection $s_x$ about any point $x\in X$ extends to a global isometry on $X$, and such that the Ricci tensor associated to the metric is non-degenerate. Then, $G$ can be taken as the displacement group, generated by all products $s_xs_y$ with $x,y\in X$, $H$ is the stabilizer subgroup of some fixed base point $o\in X$, and $\sigma$ is the involution on $G$ given by conjugation with $s_o$. It is known that $G$ coincides with the identity component of the isometry group of $X$, so it is the largest connected semisimple group acting isometrically and effectively on $X$. 

We consider the problem to find a full set of geometrically defined generators for $\DD_G(X)$. In this article, we restrict our attention to semisimple symmetric spaces that admit a metric compatible complex structure, called (semisimple) pseudo-Hermitian symmetric spaces.

For a general pseudo-Hermitian manifold, i.e., a pseudo-Riemannian manifold with metric compatible complex structure, the definition of the Laplace-Beltrami operator naturally generalizes to the definition of a series $L_m$, $m\in\NN$, of higher Laplacians, see Section~\ref{sec:higherlaplacians} for details. Here, $L_m$ is a differential operator of order $2m$, and $L_1$ coincides with the usual Laplace--Beltrami operator. By construction, these operators are invariant under automorphisms (i.e., biholomorphic isometries) of $X$. In the symmetric setting, we thus obtain $G$-invariant operators.

For Hermitian manifolds, higher Laplacians have been introduced by Engli\v{s} and Peetre \cite{EP96}, and for Hermitian symmetric spaces (i.e.\ $H=K$ is a compact subgroup of $G$), they proposed the following conjecture:

\begin{conjecture}[Engli\v{s}--Peetre '96]
	On a simple Hermitian symmetric space $X=G/K$, the higher Laplacians $L_m$, $m\in\NN$,
	generate the whole algebra $\DD_G(X)$ of $G$-invariant differential operators on $X$.
\end{conjecture}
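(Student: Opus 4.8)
The plan is to reduce the statement to a computation inside a ring of Weyl-group invariant polynomials via the Harish-Chandra isomorphism, and then to identify the top-degree part of the image of each $L_m$. Write $\gamma\colon\DD_G(X)\xrightarrow{\ \sim\ }S(\fraka_\CC)^W$ for the Harish-Chandra homomorphism attached to the symmetric pair, where $\fraka$ is a maximal split abelian subspace and $W$ is the little Weyl group; this isomorphism is what underlies the polynomiality of $\DD_G(X)$ recalled in the introduction. For a simple pseudo-Hermitian symmetric space of rank $r$ the restricted root system is of type $C_r$ or $BC_r$, so $W$ is the hyperoctahedral group acting on coordinates $\lambda_1,\dots,\lambda_r$ on $\fraka_\CC^*$ by permutations and sign changes. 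Consequently $S(\fraka_\CC)^W=\CC[p_1,\dots,p_r]$ is freely generated by the power sums $p_k=\sum_{i=1}^r\lambda_i^{2k}$, of degrees $2,4,\dots,2r$; in particular this graded ring contains no nonzero homogeneous element of odd degree. It therefore suffices to prove that $\gamma(L_1),\gamma(L_3),\dots,\gamma(L_{2r-1})$ are algebraically independent generators of $\CC[p_1,\dots,p_r]$.

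The heart of the argument is the following computation of the symbol of $\gamma(L_m)$, which I expect to be the main obstacle: the top-degree homogeneous component of $\gamma(L_m)$ should be a nonzero scalar multiple of the $W$-average of $\lambda\mapsto\sum_{i=1}^r\lambda_i^{\,m+1}$. Since averaging over the sign changes in $W$ annihilates every odd power sum, this forces
\[
 \gamma(L_m)=c_k\,p_k+(\text{lower-degree invariants}),\qquad m=2k-1,\ c_k\neq 0,
\]
when $m$ is odd, while for even $m$ the putative leading term of degree $m+1$ vanishes, so that $\deg\gamma(L_m)\le m$. To establish this I would compute the radial part of $L_m$ along $\fraka$ directly from its geometric definition in terms of the metric and the complex structure $J$, reading off the highest-order term; equivalently, one may compute the eigenvalue of $L_m$ on the relevant family of joint eigenfunctions. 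Because $\gamma$ and these leading symbols depend only on the complexified symmetric pair $(\frakg_\CC,\frakh_\CC)$ and on the data $(\fraka,W,\rho)$, the symbol computation can be carried out once and transported across the real forms, matching the Hermitian case treated by Engli\v{s} and Peetre. The delicate points are the systematic drop in degree from the order $2m$ of $L_m$ down to $m+1$---which should reflect that the principal symbol of $L_m$, being built from $J$, degenerates on the real subspace $\fraka$---and the nonvanishing of the constant $c_k$.

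Granting the key computation, the conclusion is a standard triangularity argument. The invariants $\gamma(L_1),\gamma(L_3),\dots,\gamma(L_{2r-1})$ have strictly increasing leading degrees $2,4,\dots,2r$ with nonzero leading coefficients proportional to $p_1,\dots,p_r$. An induction on $k$ then shows $p_k\in\CC[\gamma(L_1),\dots,\gamma(L_{2k-1})]$: indeed $\gamma(L_{2k-1})-c_kp_k$ is an invariant of degree $<2k$, hence a polynomial in $p_1,\dots,p_{k-1}$, which lie in the subalgebra by the inductive hypothesis. Thus the odd-indexed images generate all of $S(\fraka_\CC)^W$, and their distinct leading degrees make them algebraically independent. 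Applying $\gamma^{-1}$ yields that $L_1,L_3,\dots,L_{2r-1}$ are algebraically independent generators of $\DD_G(X)$. Since these already form a subfamily of $\{L_m\}_{m\in\NN}$, the Engli\v{s}--Peetre conjecture follows a fortiori; moreover the same degree computation shows that each even-indexed $L_{2k}$ lies in the subalgebra generated by $L_1,\dots,L_{2k-1}$, which explains why the odd Laplacians already suffice.
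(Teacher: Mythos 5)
Your overall route---Harish-Chandra isomorphism, the observation that $W$ is the hyperoctahedral group so that $(S\fraka_\CC)^W=\CC[p_1,\dots,p_r]$ with $p_k=\sum_i\lambda_i^{2k}$, and the identification of a new generator $p_{(m+1)/2}$ inside $\gamma(L_m)$ for odd $m$---is exactly the skeleton of the paper's argument. But there are two problems at the point you yourself flag as the heart of the matter. First, the claim that the \emph{top-degree} homogeneous component of $\gamma(L_m)$ has degree $m+1$ is false: $L_m$ has order $2m$ and its principal symbol coincides with that of $L_1^m$, which does not degenerate on $\fraka$, so $\gamma(L_m)$ has degree $2m$ with leading term proportional to $p_1^m$. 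The correct statement (the paper's Theorem~\ref{thm:higherlaplacians}) is that $\gamma(L_{2k+1})-P(p_1,\dots,p_k)$ has degree $2k+2$ with leading term $c_k\,p_{k+1}$ for a suitable polynomial $P$, i.e.\ the new generator sits in degree $m+1$ only \emph{after} subtracting terms of higher degree that are polynomials in the earlier power sums. Your triangularity argument, which relies on "strictly increasing leading degrees $2,4,\dots,2r$", therefore does not apply as written; a corrected induction (as in the paper, inverting the triangular relation between $L_1,L_3,\dots$ and $D_1,D_2,\dots$) still works, but only once the corrected statement is in hand.

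Second, and more seriously, the nonvanishing of $c_k$ is asserted but not established, and "compute the radial part of $L_m$ along $\fraka$ and read off the coefficient" is precisely the approach that Zhang and Engli\v{s} could only push through for ranks $2$ and $3$: since the symbol map is not an algebra homomorphism, the lower-order terms of the radial part are not controlled by the principal symbol, and the cancellations that produce the degree-$(m+1)$ term are intricate. The paper's entire Section~\ref{sec:combinatorialmodel} exists to handle this: $L_m$ is represented in $\sU(\frakg)$ by $\sum Y_{\alpha_1}\cdots Y_{\alpha_m}X_{\alpha_1}\cdots X_{\alpha_m}$, rewritten modulo commutation relations as an integer combination of "factorized words"; only those words whose associated graph is a tree with $m$ edges contribute to $D_{k+1}$ (Propositions~\ref{prop:wordoppoly} and \ref{prop:topdegreeterm}), and a careful count of signs and multiplicities yields $c_k=c_0\,\tfrac{(2k)!}{(-2)^k}\binom{2k}{k}\neq0$. (The author's acknowledgement of a counterexample to a lemma in an earlier version is a warning that this nonvanishing is genuinely delicate.) Without some substitute for this computation your argument establishes only that \emph{if} the degree-$(m+1)$ coefficient of $p_{(m+1)/2}$ is nonzero then the conjecture follows, which is the easy half.
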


In order to state our main result, recall that the action of $H$ on the tangent space over the base point defines the so called isotropy representation of $X$.

\begin{theoremnonum}
	Let $X=G/H$ be a simple pseudo-Hermitian symmetric space of rank $r$ with irreducible isotropy 
	representation. Then, the higher Laplacians
	\[
		L_1,\ L_3,\ \ldots,\ L_{2r-1}
	\]
	form a set of algebraically independent generators of the algebra $\DD_G(X)$ of $G$-invariant 
	differential operators on $X$. Moreover, for $m$ even, $L_m$ is a polynomial in $L_1,L_3\ldots, 
	L_{m-1}$.
\end{theoremnonum}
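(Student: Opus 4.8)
The plan is to transport the whole problem to the spectral side via the Harish--Chandra isomorphism. Writing $\frakg=\frakh\oplus\frakq$ for the decomposition induced by $\sigma$ and fixing a maximal abelian subspace $\fraka\subseteq\frakq$, the structure theory recalled in the introduction identifies $\DD_G(X)$ with the algebra $\Poly(\fraka_\CC)^W$ of $W$-invariant polynomials on $\fraka_\CC$, where $W$ is the little Weyl group, and this identification $\gamma$ is compatible with the order filtration on the left and the degree filtration on the right. The first step is to determine $W$: using the $G$-invariant complex structure (equivalently the central element of $\frakh$ inducing $J$ on $\frakq$) together with the irreducibility of the isotropy representation, I would show that the reduced restricted root system is of type $C_r$ or $BC_r$, so that $\Poly(\fraka_\CC)^W$ is a polynomial ring with basic homogeneous generators $y_1,\dots,y_r$ of degrees $2,4,\dots,2r$. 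Under $\gamma$ it then suffices to understand the images $\gamma(L_m)\in\Poly(\fraka_\CC)^W$, which are $W$-invariant polynomials of degree at most $2m$.

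The second step is the computation of principal symbols. From the local description of the higher Laplacians in Section~\ref{sec:higherlaplacians}, the principal symbol of $L_m$ is the $m$-th power of the symbol of the Laplace--Beltrami operator, $\sigma(L_m)=c_m\,\sigma(L_1)^m$ with $c_m\neq 0$; equivalently, the top-degree part of $\gamma(L_m)$ is a nonzero multiple of $y_1^{\,m}$. Thus the principal symbols of all the $L_m$ are powers of a single element and see only $y_1$: every higher generator $y_k$ must be produced by strictly lower-order terms of $\gamma(L_m)$, and the analysis cannot stop at the symbol level but must track the full inhomogeneous image.

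The heart of the argument is the following statement about where each basic generator first enters, which I would establish by an explicit computation of the radial part (equivalently the Harish--Chandra image) of $L_m$:
\[
  \gamma(L_{2k-1}) - c_k\,y_k \in \CC[y_1,\dots,y_{k-1}]
  \quad\text{with } c_k\in\CC^\times,
  \qquad
  \gamma(L_{2k}) \in \CC[y_1,\dots,y_k].
\]
In words: the new generator $y_k$ of degree $2k$ enters $\gamma(L_m)$ for the first time precisely at $m=2k-1$, and there only linearly and with a nonzero scalar coefficient. Although the order of $L_m$ already permits $y_k$ to appear as soon as $m\geq k$, a cancellation delays its appearance to $m=2k-1$ and suppresses all mixed monomials $y_k y_j$; proving this delayed, purely-linear appearance is the main obstacle. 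To obtain it I would use that on a symmetric space the curvature is parallel, write $L_m$ in exponential coordinates along $\exp(\fraka)\cdot o$, and separate the metric contractions from the curvature insertions in the resulting expression. The parity of $m$ should enter through a sign governed by the complex structure $J$ via the eigenspace splitting $\frakq_\CC=\frakq^+\oplus\frakq^-$, and it is this sign that makes even and odd Laplacians behave differently and forces the cancellation.

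Granting this, the theorem follows formally. The first relation shows, by induction on $k$, that $\CC[\gamma(L_1),\gamma(L_3),\dots,\gamma(L_{2k-1})]=\CC[y_1,\dots,y_k]$: since the coefficient of $y_k$ is a unit and no other occurrence of $y_k$ is present, $y_k$ can be solved for as a polynomial in $y_1,\dots,y_{k-1}$ and $\gamma(L_{2k-1})$. For $k=r$ this gives that $L_1,L_3,\dots,L_{2r-1}$ generate $\DD_G(X)$, and the triangular form makes them algebraically independent. Finally, the second relation gives $\gamma(L_{2k})\in\CC[y_1,\dots,y_k]=\CC[\gamma(L_1),\gamma(L_3),\dots,\gamma(L_{2k-1})]$, which is exactly the assertion that each even-order Laplacian $L_m$ is a polynomial in $L_1,L_3,\dots,L_{m-1}$.
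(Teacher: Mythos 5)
Your overall skeleton agrees with the paper's: pass to $\Poly(\fraka_\CC)^W$ via the Harish--Chandra isomorphism, use irreducibility of the isotropy representation to get a restricted root system of type $C_r$ or $BC_r$ (so that $W$ acts by signed permutations and the power sums $p_k(\zeta)=\zeta_1^{2k}+\cdots+\zeta_r^{2k}$ generate the invariants), and reduce everything to the displayed claim that the $k$-th basic generator first appears in $\gamma(L_{2k-1})$, linearly and with a unit coefficient, while $\gamma(L_{2k})$ involves only the first $k$ generators. That claim is exactly Theorem~\ref{thm:higherlaplacians} of the paper (with the generators $D_k=\gamma^{-1}(p_k)$ in place of your $y_k$), and your formal deduction of the main theorem from it is correct.

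The gap is that you do not prove this claim; you only announce that you would establish it by an explicit computation of the radial part in exponential coordinates, separating metric contractions from curvature insertions. This is precisely the step the paper identifies as the obstruction: it notes that computing $\gamma(D)$ for an explicitly given $D$ is notoriously hard, and that the direct symbol-comparison approach was only carried through for ranks $2$ and $3$ by Zhang and Engli\v{s}. The paper's actual proof replaces the radial-part computation by an algebraic one: $L_m$ is realized in $\sU(\frakg)^H$ as $\sum Y_{\alpha_1}\cdots Y_{\alpha_m}X_{\alpha_1}\cdots X_{\alpha_m}$, and a combinatorial calculus of words and graphs tracks the commutators. The decisive points, all absent from your sketch, are: (i) an irreducible word $w$ contributes to $D_{h+1}$ only if its graph is a tree with $m$ edges, and since a tree with $m$ edges has $m+1$ vertices while word lengths are always even, this forces $m$ odd --- so the even/odd dichotomy comes from a vertex/edge count, not from a sign attached to $J$ as you suggest; (ii) the top-degree Harish--Chandra image of a tree word is computed via the strongly orthogonal basis $A_j=E_j+F_j$ of Lemma~\ref{lem:CartanBasis} and equals $c_0\,p_{\ell(w)/2}$; and (iii) a genuine calculation (the factorization of $w_m$ modulo cycles and products of words) is needed to show that the many tree contributions sum to the non-zero constant $c_0\,(2k)!\,\tbinom{2k}{k}/(-2)^k$ rather than cancelling. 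Without an argument for these three points your proposal asserts the theorem's content rather than proving it.
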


In particular, this solves the conjecture of Engli\v{s} and Peetre for simple Hermitian symmetric spaces, since the condition on the isotropy representation is automatic in this case. More precisely, one can show \cite{Sh71}, that the isotropy representation of a simple pseudo-Hermitian symmetric space $X=G/H$ is reducible if and only if $X$ is the 'complexification' of a simple pseudo-Hermitian symmetric space $X'=G'/H'$, i.e., $X=G'_\CC/H'_\CC$. In the reducible case, the higher Laplacians do not generate the full algebra of $G$-invariant differential operators, see Remark~\ref{rmk:reducible}. 

Since the higher Laplacians are invariant not only for the displacement group $G$, but also for the full automorphism group of $X$, we note the following immediate consequence.
\begin{cornonum}
	Let $X=G/H$ be a simple pseudo-Hermitian symmetric space with irreducible isotropy 
	representation. Then, any $G$-invariant differential operator is also $\Aut(X)$-invariant. 
\end{cornonum}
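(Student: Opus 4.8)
The plan is to transport the whole statement across the Harish--Chandra homomorphism and turn it into a question in the invariant theory of a finite reflection group. Recall that $\gamma$ identifies $\DD_G(X)$ with the algebra $\CC[\fraka]^W$ of Weyl-group invariant polynomials on a maximal abelian subspace $\fraka\subseteq\frakq$, where $W$ is the Weyl group of the restricted root system and $\dim\fraka=r$; by Chevalley's theorem this is a free polynomial algebra on $r$ homogeneous generators. Each higher Laplacian $L_m$ then corresponds to a $W$-invariant polynomial $c_m:=\gamma(L_m)$, and the theorem becomes the assertion that $c_1,c_3,\ldots,c_{2r-1}$ are algebraically independent generators of $\CC[\fraka]^W$, with every $c_{2k}$ lying in the subalgebra they generate. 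I would first record that the hypothesis of irreducible isotropy is exactly what excludes the complexification case of \cite{Sh71}, so that $\frakq$ is $H$-irreducible and the restricted root data are of a single uniform type; this lets me run one computation rather than a case distinction.

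The heart of the matter is to compute the $c_m$, or at least to control them. Using the metric compatible complex structure I would express $L_m$ through the covariant Cauchy--Riemann operators of Section~\ref{sec:higherlaplacians} and then compute its radial part, equivalently its eigenvalue $c_m(\lambda)$ on the spherical functions $\phi_\lambda$. The principal symbol of $L_m$ is the $m$-th power of the symbol of the Laplace--Beltrami operator $L_1$, so the top-degree part of $c_m$ is the single quadratic $\langle\lambda,\lambda\rangle$ raised to the power $m$. This is the decisive structural feature: the $c_m$ are \emph{non-homogeneous}, their leading terms are all powers of one quadratic and are therefore nowhere near a generating set, and the information that distinguishes them sits entirely in the lower-order terms produced by the curvature of $X$ and by the $\rho$-shift built into $\gamma$. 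The concrete goal is a closed or recursive description of $c_m(\lambda)$ — ideally through a generating function for $\sum_m c_m t^m$ — exhibiting which new fundamental invariant each odd index contributes and the parity that forces the even-indexed $c_{2k}$ back into the algebra generated by $c_1,\ldots,c_{2k-1}$.

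With the $c_m$ in hand the two remaining claims are invariant-theoretic bookkeeping. For algebraic independence I would invoke the Jacobian criterion: it suffices that the Jacobian determinant of $(c_1,c_3,\ldots,c_{2r-1})$ with respect to a coordinate system on $\fraka$ not vanish identically, which I would check by evaluating at a generic point. For generation I would argue by induction through a peeling procedure: $c_1$ yields the quadratic invariant, and assuming $c_1,c_3,\ldots,c_{2k-1}$ already recover the first $k$ Chevalley generators, I subtract the appropriate polynomial in them from $c_{2k+1}$ to cancel its top term and read off the $(k+1)$-st generator from the remainder, the crux being that it appears there with nonzero coefficient. The same peeling applied to an even index shows that the remainder of $c_{2k}$ already lies in the subalgebra, which is the redundancy statement.

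The main obstacle is the explicit control of the lower-order terms of $c_m$. Because the principal symbols are all powers of a single quadratic, the usual shortcut of reading generators off leading symbols is unavailable, and one must pin down the curvature and $\rho$-shift contributions precisely enough both to guarantee the non-vanishing of the Jacobian and to exhibit the parity that makes the odd indices suffice while the even ones become redundant. Once this formula is established, the passage from $\{c_m\}$ to the stated generating set is routine symmetric-function manipulation.
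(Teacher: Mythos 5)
Your proposal never actually proves the statement it is aimed at. The corollary is a one-line consequence of two facts: (i) the higher Laplacians $L_m$ are, by their geometric construction in Section~\ref{sec:higherlaplacians}, invariant under \emph{every} biholomorphic isometry of $X$, hence under all of $\Aut(X)$ and not merely under the identity component $G$; and (ii) by the main theorem, $L_1,L_3,\ldots,L_{2r-1}$ generate $\DD_G(X)$. Any $G$-invariant operator is therefore a polynomial in $\Aut(X)$-invariant operators and is itself $\Aut(X)$-invariant. Your write-up reformulates and attacks (ii), but nowhere states (i), which is the entire point of the corollary; without it no deduction is possible, since the Harish--Chandra identification $\DD_G(X)\cong\CC[\fraka]^W$ only sees the restricted Weyl group of $G$ and is blind to the outer part of $\Aut(X)$.

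Second, even as a sketch of the main theorem your argument has an acknowledged gap at its crux. You correctly observe that the principal symbols of the $L_m$ are all powers of a single quadratic, so that independence and generation must be read off the lower-order terms; but the ``closed or recursive description of $c_m(\lambda)$'' that you defer is precisely the hard content of the paper, and there is no reason to expect a clean generating function for $\sum_m c_m t^m$ via spherical functions. The paper instead realizes $L_m$ in $\sU(\frakg)$ as $\sum Y_{\alpha_1}\cdots Y_{\alpha_m}X_{\alpha_1}\cdots X_{\alpha_m}$, rewrites this modulo commutation relations as a $\ZZ$-linear combination of factorized words, and shows (Propositions~\ref{prop:wordoppoly} and~\ref{prop:topdegreeterm}) that only words whose graphs are trees with $m$ edges contribute to the order-$(m+1)$ generator, with total coefficient $c_0\,(2k)!(-2)^{-k}\tbinom{2k}{k}\neq 0$ for $m=2k+1$, all other words yielding polynomials in $D_1,\ldots,D_k$; the even case dies because the relevant inequality $|\sV_i|\leq|\sE_i|+1$ is then strict. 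Your Jacobian criterion and ``peeling'' induction both presuppose exactly this nonvanishing of the new coefficient at each odd step, so as written the proposal assumes what must be proved.
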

For Hermitian symmetric spaces, this has been noticed before by Engli\v{s} \cite[\S\,5]{En00}.

In its original formulation, the conjecture of Engli\v{s} and Peetre is not restricted to simple Hermitian symmetric spaces, but also includes semisimple Hermitian symmetric spaces. Then, however, it is necessary to replace the displacement group $G$ by the full automorphism group $\Aut(X)$ of $X$, consisting of all biholomorphic isometries of $X$. In \cite{EP96}, Engli\v{s} and Peetre verified this modified conjecture by explicit calculations for the case of the polydisc $X=\BB^N$ with $N=2,3$, where $\BB\subseteq\CC$ is the Poincaré disc. We extend this result to the $N$-fold product $X=X_1\times\cdots\times X_1$ of any rank-1 pseudo-Hermitian symmetric space $X_1$. So far, the full conjecture concerning semisimple Hermitian or pseudo-Hermitian symmetric spaces remains unsolved.\\

\noindent
\textbf{Former results and further comments.}
For simple Hermitian symmetric spaces of rank $1$, the conjecture of Engli\v{s} and Peetre simplifies to the classical result that the Laplace--Beltrami operator generates $\DD_G(X)$. Zhang \cite{Zh00} proved this conjecture for rank $2$, and rank $3$ was settled by Engli\v{s} \cite{En00}. In both approaches, one aims for explicit formulas for the higher Laplacians in terms of algebraically defined generators for $\DD_G(X)$. Such formulas are essentially obtained  by comparing the (full) symbols of the various operators at the base point $o\in X$. However, since the symbol map is not an algebra homomorphism, this approach is computationally demanding, and gets quite complicated for higher order operators. 

One of the main difficulties in dealing with the higher Laplacians is the following: It is known that $\DD_G(X)$ is generated by $r$ differential operators of order $2, 4,\ldots, 2r$. Therefore, one might first assert that $L_1,L_2,\ldots, L_r$ are generators for $\DD_G(X)$. However, a short calculation shows that $L_m$ and $L_1^m$ have the same principal symbol. Similarly, one finds relations for all terms of degree $>m$, and as our analysis shows, it is precisely the term of order $m$ within $L_m$ that determines whether $L_m$ is algebraically independent of $L_1,\ldots, L_{m-1}$.

For the proof of our main theorem, we utilize the realization of $G$-invariant differential operators on $X$ as $H$-invariant elements in the universal algebra $\sU(\frakg)$. Algebraically, the geometric structure of $X$ provides a grading of the complexified Lie algebra, $\frakg_\CC = \frakq_-\oplus\frakh_\CC\oplus\frakq_+$. We show that the higher Laplacian $L_m$ is represented by the element
\begin{align}\label{eq:laplacian}
	L_m \equiv \sum_{\alpha_1,\ldots,\alpha_m=1}^n
			Y_{\alpha_1}\cdots Y_{\alpha_m}X_{\alpha_1}\cdots X_{\alpha_m},
\end{align}
where $(X_{\alpha})_{\alpha=1,\ldots,n}$ and $(Y_{\alpha})_{\alpha=1,\ldots,n}$ are (up to a constant) dual bases of $\frakq_+$ and $\frakq_-$ with respect to the Killing form of $\frakg_\CC$. According to the commutation relations in $\sU(\frakg)$, one can manipulate the representative of $L_m$ in $\sU(\frakg)$, and obtains a variety of different $H$-invariant elements, all having in common that the $H$-invariance is a consequence of the summation of dually paired bases. The main idea is to define and investigate a combinatorial model for such $H$-invariant elements in $\sU(\frakg)$, see Section~\ref{sec:combinatorialmodel} for details.

In the context of Hermitian symmetric spaces with $H=K$, operators similar to the higher Laplacians have originally been introduced by Shimura \cite{S90}: Let $S\frakq_+$ denote the symmetric algebra over $\frakq_+$, considered as a subalgebra of $\sU(\frakg)$. For a simple $K$-module $Z\subseteq S\frakq_+$, Shimura defines the operator
\[
	L_Z = \sum_{\nu=1}^{n_Z} Y_\nu X_\nu,
\]
where $(X_\nu)_{\nu=1,\ldots,n_Z}$ is a basis for $Z\subseteq S\frakq_+$ and $(Y_\nu)_{\nu=1,\ldots,n_Z}$ is the dual basis with respect to the pairing of $S\frakq_+$ and $S\frakq_-$ induced by the Killing form. The restriction to simple $K$-modules is not necessary here, and we note that for $Z=S_m\frakq_+$, the operator $L_Z$ coincides with $L_m$ up to a constant multiple. It follows that $L_m$ is a sum of Shimura operators, as Zhang already noted \cite{Zh00}. Shimura determines a set $Z_1,\ldots, Z_r$ of simple $K$-modules such that $L_{Z_1},\ldots,L_{Z_r}$ are algebraically independent generators of $\DD_G(X)$. We note however, that these operators are not geometrically defined operators as it is the case for the higher Laplacians.\\

\noindent
\textbf{Organization.} In the first section, we introduce Cauchy--Riemann operators and higher Laplacians on general pseudo-Hermitian manifolds. This is a slight generalization of the definition given by Engli\v{s} and Peetre \cite{EP96}. In Section~\ref{sec:HLonpHSS}, we recall the basic structure theory for pseudo-Hermitian symmetric spaces, and briefly discuss the connection between $G$-invariant differential operators, the universal enveloping algebra and the Harish-Chandra isomorphism. Our main Theorem~\ref{thm:mainthm} is stated. We define a special set of generators for $\DD_G(X)$ which is needed to identify certain terms in the higher Laplacians. Finally, we identify the higher Laplacian $L_m$ with an element of the universal enveloping algebra as stated in \eqref{eq:laplacian}. In the third section we develop the combinatorial model for a certain class of $G$-invariant differential operators, which is then used to prove our main theorem. In Section~\ref{sec:nonIrreducible} we briefly consider semisimple pseudo-Hermitian spaces. We show that the arguments in the proof of our main theorem still apply in the case of the $N$-fold product of rank-1 pseudo-Hermitian symmetric spaces.\\

\noindent
\textbf{Acknowledgements.} I like to thank Jean-Stefan Koskivirta for giving a counter example on a crucial lemma of the first version of this article. This helped to clarify the arguments, which fortunately turned out still to be applicable, eventually in the present form of this article. I am also thankful to Job Kuit and Maarten van Pruijssen for various discussions about semisimple symmetric spaces.

%==================================================================================================
\section{Higher Laplacians on pseudo-Hermitian manifolds}\label{sec:higherlaplacians}
We define higher Laplacian operators in the general setting of pseudo-Hermitian manifolds following the exposition in \cite{EP96}, where the Hermitian case is discussed.

Let $(X,g,J)$ be a pseudo-Hermitian manifold, i.e., $X$ is a smooth manifold equipped with a pseudo-Riemannian metric $g$ and a metric compatible complex structure $J$. Let $T_\CC=T^{1,0}\oplus T^{0,1}$ denote the splitting of the complexified tangent bundle of $X$ into the holomorphic and the anti-holomorphic tangent bundle, and let $h\colon T^{1,0}\times T^{0,1}\to\CC$ be the restriction of the bilinearly extended complexified metric $g_\CC$. Then, $h$ is non-degenerate and hence defines a $\CC$-linear pairing of $T_x^{1,0}$ and $T_x^{0,1}$ for each $x\in X$. Let $E\to X$ be a holomorphic vector bundle on $X$. The \emph{covariant Cauchy--Riemann operator} $\CRb_E$ of $E$ is defined by the composition
\begin{align*}%\label{eq:iCRDef}
  \begin{tikzpicture}[baseline=-6mm]
    % set up the nodes
    \node (C1) at (0,0) {$C^\infty(X,E)$};
    \node[right=of C1] (C2) {$C^\infty(X,E\otimes (T^{0,1})^*)$};
    \node[right=of C2] (C3) {$C^\infty(X,E\otimes T^{1,0})$,};
    % draw arrows and text between them
    \draw[->] (C1)--(C2) node [midway,above] {\small$\overline\partial$};
    \draw[->] (C2)--(C3) node [midway,above] {\small$h_*$};
		\draw[->,bend right=20] (C1) to node [below] {$\CRb_E$} (C3);
	\end{tikzpicture}
\end{align*}
where $h_*$ denotes is the canonical isomorphism induced by the pairing $h$. Since $E\otimes T^{1,0}$ is again a holomorphic vector bundle, iterates of the Cauchy--Riemann operator are defined in the obvious way. By abuse of notation, we simply write
\[
	\CRb^m :=\CRb\circ\cdots\circ\CRb\colon C^\infty(X,E)\to
	C^\infty(X,E\otimes (T^{1,0})^{\otimes m}).
\]
As a remark, we note that if $(X,g,J)$ is (pseudo-)Kählerian, i.e., the 2-form $g(J\cdot,\cdot)$ is closed, then one can show that $\CRb^mf$ is actually a section in $E\otimes\Sym_m$, where $\Sym_m\subseteq(T^{1,0})^{\otimes m}$ denotes the subbundle of symmetric tensors. This observation is originally due to Shimura \cite[Lemma~2.0]{S86}, and is used extensively in the theory of nearly holomorphic sections \cite{Sc14,Sc13a,S87,Zh02}. For the present article, this additional structure is not needed, even though we note that pseudo-Hermitian symmetric spaces are automatically (pseudo-)Kählerian.

In order to define higher Laplacians, we fix a pseudo-Hermitian structure on $E$, i.e., a smoothly varying non-degenerate (not necessarily positive) Hermitian form $\langle\,|\,\rangle_x$ on each fiber $E_x$, $x\in X$. For the holomorphic tangent bundle $T^{1,0}$ such a structure is given by $h(X_1,\overline X_2)$, where $\overline X_2$ denotes complex conjugation on $T_\CC$ with respect to $J$. For compactly supported sections $f_1,f_2$ in $E$, we define
\[
	\langle f_1|f_2\rangle_E:=\int_X\langle f_1(x)|f_2(x)\rangle_x\,\omega_X(x),
\]
where $\omega_X$ denotes the volume form given by the pseudo-Riemannian metric $g$ on $X$. The formal adjoint $\CR^m:=(\CRb^m)^*$ of $\CRb^m$ is defined by the relation
\begin{align}\label{eq:formaladjoint}
	\langle\CRb^mf|g\rangle_{E\otimes(T^{1,0})^{\otimes m}} = \langle f|\CR^m g\rangle_E
\end{align}
for all compactly supported sections $f$ in $E$ and $g$ in $E\otimes(T^{1,0})^{\otimes m}$. Then, 
\[
	\CR^m\colon C^\infty(X,E\otimes(T^{1,0})^{\otimes m})\to C^\infty(X,E)
\]
is again a differential operator of order $m$, and the \emph{higher Laplacian operator} $L_m$ is the differential operator of order $2m$ defined by
\[
	L_m:=(-1)^m\CR^m\CRb^m\colon C^\infty(X,E)\to C^\infty(X,E).
\]
In particular, for the trivial line bundle $E=X\times\CC$, we obtain operators $L_1,L_2,\ldots$ acting on functions on $X$. In holomorphic coordinates $z=(z^1,\ldots, z^n)$ on an open subset of $X$, one easily derives that the first Laplacian $L_1$ is given by
\[
	 L_1f = -\CR\CRb f = \sum_{i,j=1}^n\frac{1}{|h|}\frac{\partial}{\partial z^i}
	 	\left(|h|h^{ji}\frac{\partial f}{\partial\bar z^j}\right),
\]
where $|h|$ is the determinant of the matrix $h_{ij}:=h(\frac{\partial}{\partial z^i},\frac{\partial}{\partial\bar z^j})$, and $h^{ji}$ denote the entries of the inverse matrix of $h_{ij}$. This is the well-known local formula for the ordinary Laplace--Beltrami operator of $X$. 

It is straightforward to see that the Cauchy--Riemann operator $\CRb^m$ and its adjoint $\CR^m$ commute with the action of biholomorphic Hermitian bundle isomorphisms of $E$ which act isometrically on the base $X$. Therefore, each higher Laplacian $L_m$ is invariant under such isomorphisms. 

%==================================================================================================
\section{Higher Laplacians on simple pseudo-Hermitian symmetric spaces}\label{sec:HLonpHSS}
%--------------------------------------------------------------------------------------------------
\subsection{Pseudo-Hermitian symmetric spaces}
Let $X$ be a semisimple pseudo-Hermitian symmetric space as defined (in geometric terms) in the introduction. Let $G$ be the displacement group of $X$, which is a connected semisimple Lie group. In fact, $G$ coincides with the connected identity component of the automorphism group $\Aut(X)$ of $X$, which consists of biholomorphic isometries. Let $o\in X$ be a fixed base point with symmetry $s_o$, and let $H\subseteq G$ be the stabilizer subgroup of $o$ in $G$. Let $\sigma$ denote the involution of $G$ given by conjugation with $s_o$. We recall some standard facts about pseudo-Hermitian symmetric spaces, and refer to \cite{Sh71} for detailed information.

Any semisimple pseudo-Hermitian symmetric space is simply connected, and hence decomposes according to the deRham--Wu decomposition \cite{Wu64} into a product of simple pseudo-Hermitian symmetric spaces. For the following, we assume that $X$ is simple, which corresponds to the assumption that the displacement group $G$ is simple. For this, we note that a simple complex Lie group $G$, considered as a pseudo-Riemannian symmetric space via the standard construction $G\cong G\times G/\diag(G)$, is not a pseudo-Hermitian manifold, since the complex structure of $G$ is not compatible with the pseudo-Riemannian metric.

Let $\frakg$, $\frakh$ denote the Lie algebras of $G$ and $H$, and let $\sigma$ also denote the involution induced by the adjoint action of $s_o$ on $\frakg$. Let
\begin{align}\label{eq:sigmadecomp}
	\frakg = \frakh\oplus\frakq
\end{align}
be the decomposition of $\frakg$ into the $+1$ and $-1$ eigenspace of $\sigma$. Complexification of a Lie algebra is denoted by a corresponding index, e.g.\ $\frakg_\CC:=\frakg\oplus i\frakg$. For $X=X_1+iX_2\in\frakg$, let $\overline X:=X_1-iX_2$ denote complex conjugation on $\frakg_\CC$. Geometrically, $\frakq$ corresponds to the real tangent space at $o\in X$, $\frakq_\CC$ corresponds to the complex tangent space, and the complex structure $J_o$ on $T_oX$ induces the decomposition
\begin{align}\label{eq:qdecomp}
	\frakq_\CC = \frakq_-\oplus\frakq_+,
\end{align}
whose parts can be identified with the holomorphic and the anti-holo\-morphic tangent space at $o\in X$. We note that $\overline{\frakq_+}=\frakq_-$. The metric on $\frakq\cong T_oX$ defines a non-degenerate symmetric complex bilinear form
\begin{align}\label{eq:metric}
	(\,|\,)\colon\frakq_\CC\times\frakq_\CC\to\CC.
\end{align}
Since $X$ is simple, this form coincides up to a constant multiple with the Killing form of $\frakg_\CC$, restricted to $\frakq_\CC$. Therefore, \eqref{eq:metric} extends to a bilinear form on $\frakg_\CC$, and this extension is associative, i.e., $([X,Y]|Z) = (X|[Y,Z])$. The restriction of \eqref{eq:metric} to $\frakq_+\times\frakq_-$ correlates to the pairing $h$ of the holomorphic and the anti-holomorphic tangent bundle at the base point $o\in X$.\\
The existence of a metric compatible complex structure on $X$ corresponds to the fact that $\frakh$ has non-trivial center acting with pure imaginary eigenvalues on $\frakq$. Moreover, the decomposition \eqref{eq:qdecomp} is also induced by the adjoint action of a unique central element $Z_0\in i\frakz(\frakh)$, and it follows that $\frakg$ admits the grading 
\[
	\frakg = \frakq_-\oplus\frakh_\CC\oplus\frakq_+,\quad\frakq_\pm=\set{X\in\frakg}{[Z_0,X]=\pm X}.
\]
In particular, $\frakq_\pm$ are abelian subalgebras. The adjoint action of $H$ on $\frakq$ is called the isotropy representation of $X$. Even though $X$ is simple, this representation can be reducible. More precisely, the isotropy representation of $X$ is irreducible if and only if the complex Lie algebra $\frakg_\CC$ is simple. In this case, the 'complexified' pseudo-Hermitian symmetric space $X_\CC=G_\CC/H_\CC$ is again a simple pseudo-Hermitian symmetric space, but with reducible isotropy representation.

There is a non-compact Hermitian symmetric space $X^r$ associated to $X$, called the Hermitian form of $X$, by the following construction: Let $\theta$ be a Cartan involution of $\frakg$ which commutes with $\sigma$. Then, the Cartan decomposition $\frakg =\frakk\oplus\frakp$ is compatible with \eqref{eq:sigmadecomp}, and we obtain
\begin{align*}
	\frakg
	=(\frakk\cap\frakh)\oplus(\frakk\cap\frakq)\oplus(\frakp\cap\frakh)\oplus(\frakp\cap\frakq).
\end{align*}
The Hermitian form of $X$ is defined by $X^r:=G^d/K^d$ with $G^d$ and $K^d$ determined by their Lie algebras $\frakg^d$ and $\frakk^d$,
\begin{align*}
	\frakg^d := \frakk^d\oplus\frakp^d\quad\text{with}\quad\left\{
	\begin{aligned}
		\frakk^d&:=(\frakk\cap\frakh)\oplus i(\frakp\cap\frakh),\\
		\frakp^d&:=(\frakp\cap\frakq)\oplus i(\frakk\cap\frakq).
	\end{aligned}\right.
\end{align*}
Indeed, if $\sigma_\CC$ denotes the complex linear extension of $\sigma$ to $\frakg_\CC$, then $\theta^d:=\sigma_\CC|_{\frakg^d}$ is a Cartan involution for $\frakg^d$ with fixed point set $\frakk^d$. Moreover, one can show that $iZ_0$ is fixed by $\theta$ (see e.g.\ \cite[Proposition~10]{BN09}), hence $iZ_0\in\frakk\cap\frakh$ is a non-trivial central element of $\frakk^d$. This shows, that the Riemannian symmetric space $X^r=G^d/K^d$ associated to the pair $(\frakg^d,\frakk^d)$ is indeed Hermitian. We also note that 
\[
	\frakk^d = \frakg^d\cap\frakh_\CC,\ \frakp^d = \frakg^d\cap\frakq_\CC
	\quad\text{and}\quad
	\frakp_+=\frakq_+,\ \frakp_-=\frakq_-,
\]
where $\frakp_\pm$ is the $\pm 1$-eigenspace of $\frakp^d_\CC$ corresponding to the complex structure (induced by the adjoint action of $Z_0$) on $\frakp^d$. A straightforward calculation also shows that the isotropy representation of $X$ is irreducible if and only if the Hermitian form $X^r$ is simple. In the reducible case, $X^r$ splits into the product $X^r=X^r_0\times X^r_0$ of two copies of a non-compact simple Hermitian symmetric space $X_0^r$.

Recall that a Cartan subspace of $\frakq$ is a maximal abelian subalgebra $\fraka\subseteq\frakq$ consisting of semi-simple elements. Any Cartan subspace has the same dimension, $r=\dim\fraka$, which is called the \emph{rank} of $X$. We will need a particular choice of a basis for a Cartan subspace.

\begin{lemma}\label{lem:CartanBasis}
	Let $\fraka\subseteq\frakq$ be a Cartan subspace. There exists a basis $A_1,\ldots,A_r$ of 
	$\fraka_\CC$ satisfying the following properties: Let $E_j\in\frakq_+$ and 
	$F_j\in\frakq_-$ be defined by $A_j=E_j+F_j$, and $H_j:=[E_j,F_j]$. Then,
	\begin{align}\label{eq:commutators}
		[E_j,F_j]=:H_j,\quad [H_j,E_j]=E_j,\quad [H_j,F_j]=-F_j,
	\end{align}
	and $[E_j,F_k]=0$ for $j\neq k$. Moreover, $(E_j|F_j)$ is non-zero and independent of
	$1\leq j\leq r$. Set
	\begin{align}\label{eq:structureconstant}
		c_0:=(E_j|F_j).
	\end{align}
	Then, $(A_j|A_k)=2c_0\,\delta_{jk}$ for $1\leq j,k\leq r$.
\end{lemma}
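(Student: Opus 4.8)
The plan is to construct the basis by passing through the Hermitian form $X^r = G^d/K^d$, where the structure theory of non-compact Hermitian symmetric spaces provides exactly the restricted root data we need. Since $\fraka \subseteq \frakq$ is a Cartan subspace and $\frakp_\pm = \frakq_\pm$, I would first arrange (after conjugating by $H$) that $\fraka$ is compatible with the Cartan involution $\theta^d$, so that $\fraka$ sits inside a maximal abelian subspace of $\frakp^d$. For Hermitian symmetric spaces the restricted root system relative to such a maximal abelian subspace is of type $C_r$ or $BC_r$, and Harish-Chandra's theory supplies strongly orthogonal roots $\gamma_1,\ldots,\gamma_r$ whose root vectors give the desired $\mathfrak{sl}_2$-triples. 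Concretely, I expect to obtain root vectors $E_j \in \frakq_+$ and $F_j = \overline{E_j} \in \frakq_-$ (using $\overline{\frakq_+} = \frakq_-$) such that $A_j := E_j + F_j$ spans $\fraka_\CC$ and realizes the strongly orthogonal long roots.

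The relations in \eqref{eq:commutators} would then follow from normalizing each triple: setting $H_j := [E_j, F_j]$, the associativity of the form $([X,Y]|Z)=(X|[Y,Z])$ together with $[Z_0, E_j] = E_j$, $[Z_0, F_j] = -F_j$ forces $H_j$ to act as $\pm 1$ on $E_j, F_j$ after rescaling $E_j, F_j$ by a common factor. Here the key point is that $H_j \in \fraka_\CC$ (since $A_j = E_j + F_j$ gives $[Z_0,A_j]=E_j-F_j=:A_j'$ and bracketing again returns into $\fraka_\CC$), so each $A_j$ genuinely generates a standard $\mathfrak{sl}_2$-triple inside $\frakg_\CC$. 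The vanishing $[E_j, F_k] = 0$ for $j \neq k$ is precisely the strong orthogonality of the roots $\gamma_j, \gamma_k$: strongly orthogonal roots have the property that neither $\gamma_j + \gamma_k$ nor $\gamma_j - \gamma_k$ is a root, and since $E_j \in \frakg^{\gamma_j}$, $F_k \in \frakg^{-\gamma_k}$, their bracket lands in a zero root space off the Cartan, hence vanishes.

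For the metric statements, I would compute $(E_j|F_j)$ using that the form restricted to $\frakp_+ \times \frakp_-$ is, up to constant, the Killing form of $\frakg_\CC$. Independence of $(E_j|F_j)$ from $j$ is a consequence of the equal length of the long roots $\gamma_j$ together with the normalization $[H_j, E_j] = E_j$: since associativity gives $(E_j|F_j)(H_j|Z_0)$-type identities, and all the triples are $\Aut$-conjugate by the Weyl group of $\fraka$ (which acts transitively on the long roots in type $C_r$), the value $c_0 := (E_j|F_j)$ is forced to be a single constant. Finally $(A_j|A_k) = (E_j + F_j | E_k + F_k) = (E_j|F_k) + (F_j|E_k)$, where the diagonal $E_j, E_k$ and $F_j, F_k$ pairings vanish because $\frakq_\pm$ are isotropic for the form (as $[Z_0, \cdot]$ acts by $\pm 1$ and the form is $Z_0$-invariant), yielding $(A_j|A_k) = 2c_0\,\delta_{jk}$ once one checks $(E_j|F_k) = 0$ for $j \neq k$ from orthogonality of distinct root spaces.

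The main obstacle I anticipate is the normalization bookkeeping: ensuring a single common rescaling of $E_j$ and $F_j$ simultaneously achieves $[H_j, E_j] = E_j$, keeps $F_j = \overline{E_j}$ (so that $A_j$ remains real-structure compatible with $\fraka \subseteq \frakq$), and makes $(E_j|F_j)$ independent of $j$. These constraints are not obviously simultaneously satisfiable without invoking the Weyl-group transitivity on long roots, so the crux is to verify that the equal-length restricted roots of the Hermitian form permit a uniform normalization. Once that transitivity is in hand, all three normalization conditions align, and the remaining computations are the routine isotropy and orthogonality facts sketched above.
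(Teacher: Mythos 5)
Your proposal follows essentially the same route as the paper: pass to the Hermitian form $X^r=G^d/K^d$, conjugate $\fraka$ to a $\theta$-invariant Cartan subspace so that $\fraka^d=i(\fraka\cap\frakk)\oplus(\fraka\cap\frakp)$ is a Cartan subspace of $\frakp^d$ with $\fraka^d_\CC=\fraka_\CC$, and then invoke Harish-Chandra's strongly orthogonal roots together with Moore's theorem that these roots all have the same length (equivalently, your ``Weyl-group transitivity on the long roots in type $C_r$/$BC_r$''); the latter is precisely what resolves the normalization worry you flag at the end, so the approach goes through. Two small corrections, neither of which affects the argument. First, $H_j=[E_j,F_j]$ lies in $[\frakq_+,\frakq_-]\subseteq\frakh_\CC$ (in the Hermitian picture, in $\frakk^d_\CC$), not in $\fraka_\CC$; the $\mathfrak{sl}_2$-triple relations come from the root normalization, not from $H_j$ landing in the Cartan subspace $\fraka_\CC$. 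Second, you should not try to arrange $F_j=\overline{E_j}$ with respect to the conjugation of $\frakg_\CC$ over $\frakg$: as the Remark following the lemma points out, the $A_j$ cannot in general be chosen inside the real form $\fraka$ (that would force the metric to be definite). The reality that Harish-Chandra's construction provides is with respect to the real form $\frakg^d$, i.e.\ the $A_j$ form a real basis of $\fraka^d$, and the lemma only asks for a basis of $\fraka_\CC$, so this constraint should simply be dropped from your list.
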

\begin{proof}
We reduce this statement to a well-known result on non-compact Hermitian symmetric spaces by use of the Hermitian form $X^r$ of $X$ defined above. Recall that any Cartan subspace $\fraka$ of $\frakq$ is $H$-conjugate to a $\theta$-invariant Cartan subspace. We thus may assume that $\fraka$ is $\theta$-invariant. Then, $\fraka^d=i(\fraka\cap\frakk)\oplus(\fraka\cap\frakp)$ is a Cartan subspace of $\frakp^d$. For non-compact Hermitian symmetric spaces as $X^r$, Harish-Chandra introduced a Cartan subspace in $\frakp^d$ with appropriate basis by his construction of strongly orthogonal roots, see \cite[II.\S\,6]{HC55}. Moreover, due to Moore \cite{Mo64}, all strongly orthogonal roots have the same length. Since any two Cartan subspaces in $\frakp^d$ are conjugate by an element of $K^d$, this provides a real basis of $\fraka^d$ satisfying all properties we demand. Since $\fraka^d_\CC=\fraka_\CC$, this proves our statement.
\end{proof}

\begin{remark}
	For a Cartan subspace $\fraka\subseteq\frakq$, the basis $A_1,\ldots,A_r$ of $\fraka_\CC$ 
	given by Lemma~\ref{lem:CartanBasis} can in general not be chosen as real a basis for $\fraka$, 
	since the relation $(A_j|A_k)=2c_0\,\delta_{jk}$ would imply that the metric $g$ on $X$ is 
	positive (respectively negative) definite.
\end{remark}

%--------------------------------------------------------------------------------------------------
\subsection{Invariant differential operators}
Let $\DD_G(X)$ denote the algebra of $G$-invariant operators on $X$. Examples of such operators are given by the higher Laplacians $L_m$, $m\geq0$, defined in the first section. A fundamental result on symmetric spaces mainly due to Harish-Chandra asserts that the algebra $\DD_G(X)$ is commutative and finitely generated by $r$ algebraically independent elements, where $r$ is the rank of $X$. The main goal of this article is to prove the following result.

\begin{theorem}\label{thm:mainthm}
	Let $X=G/H$ be a simple pseudo-Hermitian symmetric space of rank $r$ with irreducible isotropy 
	representation. Then, the higher Laplacians
	\[
		L_1,\ L_3,\ldots,\ L_{2r-1}
	\]
	form a set of algebraically independent generators for $\DD_G(X)$. Moreover, for $m$ even, $L_m$ 
	is a polynomial in $L_1,L_3,\ldots,L_{m-1}$.
\end{theorem}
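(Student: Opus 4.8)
The plan is to pass to the Harish--Chandra isomorphism and to read off both assertions from the leading terms of the images of the $L_m$. Recall from Section~\ref{sec:HLonpHSS} that $\DD_G(X)\cong(\sU(\frakg)/\sU(\frakg)\frakh)^H$, and that the Harish--Chandra homomorphism furnishes an algebra isomorphism $\gamma\colon\DD_G(X)\xrightarrow{\ \sim\ }S(\fraka_\CC)^W$ onto the invariants of the little Weyl group $W$. Since $X$ has irreducible isotropy representation, its Hermitian form $X^r$ is a simple non-compact Hermitian symmetric space, whose restricted root system relative to $\fraka^d_\CC=\fraka_\CC$ is of type $C_r$ or $BC_r$; in either case $W$ is the hyperoctahedral group, so that $S(\fraka_\CC)^W=\CC[p_2,p_4,\ldots,p_{2r}]$ is freely generated by the power sums $p_{2k}=\sum_{j=1}^r\mu_j^{2k}$, where $\mu_1,\ldots,\mu_r$ are the linear coordinates dual to the basis $A_1,\ldots,A_r$ of Lemma~\ref{lem:CartanBasis}. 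As $\gamma$ respects the order filtration, with associated graded the symbol map, it suffices to locate the images $\gamma(L_m)\in\CC[p_2,\ldots,p_{2r}]$ precisely enough to see that $\gamma(L_1),\gamma(L_3),\ldots,\gamma(L_{2r-1})$ are algebraically independent generators.

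To compute $\gamma(L_m)$ I would start from the representative \eqref{eq:laplacian} together with the commuting $\mathfrak{sl}_2$-triples $(E_j,F_j,H_j)$ of Lemma~\ref{lem:CartanBasis}, whose elements $A_j=E_j+F_j$ span $\fraka_\CC$. Decomposing each basis vector of $\frakq_\pm$ into its $\fraka_\CC$-component and the restricted root vectors, one moves all $\frakq_-$-factors of $\sum Y_{\alpha_1}\cdots Y_{\alpha_m}X_{\alpha_1}\cdots X_{\alpha_m}$ past the $\frakq_+$-factors; every commutator $[Y_\alpha,X_\beta]$ lands in $\frakh_\CC$, and iterating this reduction and projecting onto $S(\fraka_\CC)$ expresses $\gamma(L_m)$ as a sum indexed by the ways the $m$ raising and $m$ lowering indices are paired and distributed among the triples. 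This is exactly the bookkeeping that the combinatorial model of Section~\ref{sec:combinatorialmodel} is designed to organize. At the top of the filtration one already sees that the principal symbol of $L_m$ equals $p_2^{\,m}$, the same as that of $L_1^m$, so algebraic independence can only be detected strictly below the symbol.

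The heart of the argument is the following triangularity statement, which I would extract from the combinatorial model: for each $m$ the image $\gamma(L_m)$ lies in the subalgebra $\CC[p_2,\ldots,p_{2\lceil m/2\rceil}]$, and modulo $\CC[p_2,\ldots,p_{2\lceil m/2\rceil-2}]$ it is a \emph{nonzero} scalar multiple of $p_{2\lceil m/2\rceil}$. Thus an operator of order $2m$ does not reach the power sum $p_{2m}$ suggested by its order, but only $p_{2\lceil m/2\rceil}$; this is the sense in which, as anticipated in the introduction, it is a term of order about $m$ rather than the order-$2m$ symbol that is decisive, and it is the reason the index must run up to $2r-1$. Granting this, write $m=2k-1$: by induction on $k$ the operator $L_{2k-1}$ produces the new generator $p_{2k}$, not yet reached by $L_1,L_3,\ldots,L_{2k-3}$, so that $\gamma(L_1),\gamma(L_3),\ldots,\gamma(L_{2r-1})$ are related to $p_2,p_4,\ldots,p_{2r}$ by a triangular change of generators with non-vanishing diagonal; hence they are algebraically independent and generate $S(\fraka_\CC)^W$, and pulling back along $\gamma$ gives the first assertion. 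For $m=2k$ even, $\lceil m/2\rceil=k$, so $\gamma(L_{2k})\in\CC[p_2,\ldots,p_{2k}]=\CC[\gamma(L_1),\gamma(L_3),\ldots,\gamma(L_{2k-1})]$; since $\gamma$ is an isomorphism, $L_{2k}$ is a polynomial in $L_1,L_3,\ldots,L_{2k-1}$, which is the second assertion.

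I expect the main obstacle to be precisely the two claims inside the triangularity statement: that the would-be contributions to $p_{2l}$ with $l>\lceil m/2\rceil$ all cancel (equivalently, recombine into products of lower power sums, consistent with the clean symbol $p_2^{\,m}$), and that the surviving coefficient of $p_{2\lceil m/2\rceil}$ is genuinely nonzero. Both are assertions about highly structured but large, partly alternating sums arising from the reorderings in $\sU(\frakg)$, and because the symbol map is not an algebra homomorphism they are invisible at the level of principal symbols. Controlling them is exactly the purpose of the combinatorial model, and this is where the real work of the proof will lie.
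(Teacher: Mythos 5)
Your reduction is exactly the paper's: pass to the Harish--Chandra isomorphism, observe that irreducibility of the isotropy representation forces the restricted root system to be of type $C_r$ or $BC_r$ so that $W$ is the hyperoctahedral group and the invariants are freely generated by the even power sums, and deduce both assertions from a triangular relation between $\gamma(L_1),\gamma(L_3),\ldots$ and those power sums. This is precisely Theorem~\ref{thm:higherlaplacians} with $D_k=\gamma^{-1}(p_k)$, and the logic by which the theorem follows from it is correct, including the observation that the decisive information sits at order about $m$ rather than in the order-$2m$ principal symbol.

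However, the triangularity statement you isolate is not a lemma you can cite; it is the entire content of Section~\ref{sec:combinatorialmodel}, and your proposal stops exactly where the work begins. Two things have to be proved. First, the containment $\gamma(L_m)\in\CC[p_1,\ldots,p_{\lceil m/2\rceil}]$ (in the paper's indexing $p_k=\sum_j\zeta_j^{2k}$): the paper obtains this by rewriting $w_m=y_1\cdots y_mx_1\cdots x_m$ modulo the relations (R1)--(R3) as a $\ZZ$-combination of \emph{factorized} words (Proposition~\ref{prop:factorization}), and then showing for each irreducible factor $w_i$, via the graph bound $|\sV_i|\le|\sE_i|+1\le m+1$ combined with Lemma~\ref{lem:lowdegreeoperatros}, that $\Op(w_i)$ has order at most $m+1$ and hence is a polynomial in $D_1,\ldots,D_{\lfloor m/2\rfloor+1}$, with the top generator occurring only when $m$ is odd and the associated graph is a tree with $m$ edges, in which case its coefficient is computed to be $c_0$ (Propositions~\ref{prop:wordoppoly} and~\ref{prop:topdegreeterm}). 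Second, and harder, the non-vanishing of the coefficient of $p_{k+1}$ in $\gamma(L_{2k+1})$: the tree-words enter the factorized representative of $w_m$ with alternating signs, so a priori their contributions could cancel. The paper excludes this only by an explicit recursive computation of the representative modulo $R'$ (discarding words whose graphs contain cycles and words that are products), arriving at $c_k=c_0\,\tfrac{(2k)!}{(-2)^k}\tbinom{2k}{k}\neq0$. You correctly flag both points as the place ``where the real work of the proof will lie,'' but nothing in the proposal addresses them; as written it is an accurate plan for the paper's argument rather than a proof.
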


Recall that Harish-Chandra's result is established essentially in two steps. One first identifies $H$-invariant elements of the universal enveloping algebra $\sU(\frakg)$ of $\frakg_\CC$ with $G$-invariant differential operators on $X$ in the following way: Describe smooth functions on $X$ as right $H$-invariant smooth functions on $G$,
\[
	C^\infty(X)\cong\set{f\colon G\to\CC\text{ smooth}}{f(gh)=f(g)\text{ for all $g\in G,h\in H$}},
\]
and extend the right action of $\frakg$ on $f\in C^\infty(G)$ to a right action of $\sU(\frakg)$ on $C^\infty(G)$. Then, the action of $H$-invariant elements $Y\in\sU(\frakg)^H$ restricts to an action on $C^\infty(X)$. The result is a surjective morphism of algebras,
\begin{align*}
	\sR\colon\sU(\frakg)^H\to\DD_G(X),\ Y\mapsto\sR_Y,
\end{align*}
and it turns out that the kernel is $\sU(\frakg)\frakh_\CC\cap\sU(\frakg)^H$, so
\begin{align}\label{eq:firstisomorphism}
	\sU(\frakg)^H/(\sU(\frakg)\frakh_\CC\cap\sU(\frakg)^H)\cong\DD_G(X).
\end{align}
This identification of algebras holds more generally for any reductive homogeneous space, see e.g.\ \cite{He84}.\\
The second step is specific to symmetric spaces and identifies the left hand side of \eqref{eq:firstisomorphism} with the algebra of $W$-invariant elements in the symmetric algebra $S\fraka_\CC$, where $\fraka\subseteq\frakq$ is a Cartan subspace, and $W$ is the Weyl group attached to the (restricted) root system $\Phi(\frakg,\fraka_\CC)$. More precisely, choose a positive system in $\Phi(\frakg,\fraka_\CC)$ and let $\frakg=\frakn_\CC\oplus\fraka_\CC\oplus\frakh_\CC$ be the corresponding complex Iwasawa decomposition. This induces a vector space decomposition of the enveloping algebra, 
\[
	\sU(\frakg)=(\frakn_\CC\sU(\frakg)+\sU(\frakg)\frakh_\CC)\oplus S\fraka_\CC,
\]
which defines the projection
\begin{align}\label{eq:aprojection}
	\sU(\frakg)\to S\fraka_\CC,\ Y\mapsto Y_\fraka.
\end{align}
Considering $Y_\fraka$ as a differential operator on $\fraka$, the main result in this context states that 
\begin{align}\label{eq:secondisomorphism}
	\sU(\frakg)^H\to (S\fraka_\CC)^W,\ Y\mapsto e^{-\rho}\, Y_\fraka\circ e^{\rho}
\end{align}
is a surjective morphism of algebras with kernel $\sU(\frakg)\frakh_\CC\cap\sU(\frakg)^H$, where $\rho\in\fraka^*_\CC$ is half the trace of the adjoint action on $\frakn_\CC$. In combination, \eqref{eq:firstisomorphism} and \eqref{eq:secondisomorphism} lead to the famous Harish-Chandra isomorphism
\begin{align}\label{eq:CHiso}
	\gamma\colon\DD_G(X)\to(S\fraka_\CC)^W.
\end{align}
Finally, the work of Chevalley on finite reflection groups shows that the algebra of $W$-invariants in $S\fraka_\CC$ is generated by $r=\dim\fraka$ algebraically independent elements.

In practice, however, it is notoriously hard to determine $\gamma(D)$ for an explicitly given operator $D\in\DD_G(X)$, say except for the Laplace-Beltrami operator $L_1$. In order to prove Theorem~\ref{thm:mainthm}, we analyze the higher Laplacians within a particular class of $H$-invariant elements in $\sU(\frakg)$. This class of operators is defined by a combinatorial model, see Section~\ref{sec:combinatorialmodel}. Our analysis shows that the higher Laplacian $L_m$ is a polynomial of $G$-invariant operators of order $\leq m$. Finally, we compare the operators of order $m$ within this polynomial via the Harish-Chandra isomorphism with a given set of generators for $\DD_G(X)$. This will eventually prove Theorem~\ref{thm:mainthm}.

\begin{remark}\label{rmk:reducible}
  We note that Theorem~\ref{thm:mainthm} fails for simple pseudo-Hermitian symmetric spaces with 
  reducible isotropy representation: Consider the Hermitian dual $X^r$ of $X$. It is a consequence 
  of \eqref{eq:firstisomorphism}, that the algebras $\DD_G(X)$ and $\DD_{G^d}(X^r)$ are 
  naturally isomorphic. If the isotropy representation is reducible, then $X^r$ splits 
  into the product $X^r=X_0^r\times X_0^r$ of two copies of a simple non-compact Hermitian 
  symmetric spaces $X_0^r=G_0/H_0$, and $G^d=G_0\times G_0$. In this case, there exist 
  $G^d$-invariant differential operators on $X^r$ that are not invariant under the full 
  automorphism group of $X^r$, which in particular contains the permutation of the two copies of 
  $X_0^r$ in $X^r$. Since the higher Laplacians are invariant under arbitrary automorphisms of 
  $X^r$, it follows that the higher Laplacians cannot generate $\DD_{G^d}(X^r)$ and hence 
  also cannot generate $\DD_G(X)$.
\end{remark}

%--------------------------------------------------------------------------------------------------
\subsection{Generators}\label{subsec:generators}
We fix a particular set of generators for $\DD_G(X)$ by means of the Harish-Chandra isomorphism $\gamma$. Let $A_1,\ldots,A_r$ be the basis of $\fraka_\CC$ given by Lemma~\ref{lem:CartanBasis}, and let $A_1',\ldots,A_r'$ denote the basis dual to $A_1,\ldots,A_r$ with respect to \eqref{eq:metric}. In fact, $A_j'=\tfrac{1}{2c_0}\,A_j$. Instead of considering the symmetric algebra $S\fraka_\CC$, we prefer to work in the dual setting, i.e., we identify $\fraka_\CC$ with its dual $\fraka_\CC^*$ with respect to \eqref{eq:metric}, which also induces an algebra isomorphism between the symmetric algebra $S\fraka_\CC$ and the polynomial algebra $\Poly(\fraka_\CC)$. For $k\in\NN$, set
\begin{align}\label{eq:polynomials}
	p_k(\zeta) := \zeta_1^{2k}+\cdots+\zeta_r^{2k},\quad\zeta\in\fraka_\CC,
\end{align}
where $\zeta_j:=(\zeta|A_j')$, i.e., $\zeta=\sum\zeta_jA_j$. As before, let $W$ be the Weyl group of the restricted root system $\Phi(\frakg_\CC,\fraka_\CC)$. 

\begin{lemma}\label{lem:generators}
	If $X$ is simple with irreducible isotropy representation, then the polynomials
	$p_1,\ldots, p_r$ are $W$-invariant and form a set of algebraically independent generators 
	for $\Poly(\fraka_\CC)^W$.
\end{lemma}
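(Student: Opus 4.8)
The plan is to reduce the computation of $W$ and of its invariants to the well-understood restricted root structure of the Hermitian form $X^r$. First I would observe that the root system $\Phi(\frakg_\CC,\fraka_\CC)$ is literally the complexification of the restricted root system of $X^r$: the construction of the Hermitian form gives $\frakg^d_\CC=\frakg_\CC$, and the proof of Lemma~\ref{lem:CartanBasis} yields $\fraka^d_\CC=\fraka_\CC$ with $\fraka^d$ a maximal abelian subspace of $\frakp^d$. Hence the $\fraka_\CC$-root space decomposition of $\frakg_\CC$ agrees with that of $\frakg^d_\CC$, and the two systems share the same Weyl group $W$. Since the isotropy representation is irreducible, $X^r$ is a simple non-compact Hermitian symmetric space, so $\Phi(\frakg_\CC,\fraka_\CC)$ is irreducible.

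Next I would invoke the structure theory of Hermitian symmetric spaces. The basis $A_1,\ldots,A_r$ of Lemma~\ref{lem:CartanBasis} is built from Harish-Chandra's strongly orthogonal roots $\gamma_1,\ldots,\gamma_r$, which by Moore \cite{Mo64} all have the same length; the relations \eqref{eq:commutators} together with $(A_j|A_k)=2c_0\,\delta_{jk}$ exhibit the $\gamma_j$ as a mutually orthogonal system of equal length. Consequently the restricted root system is of type $C_r$ (tube case) or $BC_r$ (non-tube case), in either case with Weyl group the hyperoctahedral group $W\cong(\ZZ/2)^r\rtimes\SymGp_r$ of signed permutations. In the coordinates $\zeta_j=(\zeta|A_j')$ adapted to this basis — in which the strongly orthogonal roots $\gamma_j$ are proportional to the coordinate functionals $\zeta_j$ — the group $W$ therefore acts by permuting $\zeta_1,\ldots,\zeta_r$ and independently changing their signs. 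The $W$-invariance of each $p_k(\zeta)=\sum_j\zeta_j^{2k}$ is then immediate, since signed permutations merely permute the squares $\zeta_1^2,\ldots,\zeta_r^2$.

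Finally, I would identify $\Poly(\fraka_\CC)^W$ directly. A polynomial is invariant under sign changes precisely when it is a polynomial in the variables $u_j:=\zeta_j^2$, and additionally invariant under $\SymGp_r$ precisely when it is a symmetric polynomial in $u_1,\ldots,u_r$; thus $\Poly(\fraka_\CC)^W$ is the ring of symmetric polynomials in the $r$ variables $u_j$. Over a field of characteristic zero the power sums $p_k=\sum_j u_j^k$, $k=1,\ldots,r$, are algebraically independent and generate this ring — a classical consequence of Newton's identities relating them to the elementary symmetric functions. This gives exactly the assertion for $p_1,\ldots,p_r$.

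The main obstacle I anticipate is not the invariant-theoretic endgame, which is routine, but pinning down the precise identification in the second paragraph: one must verify that the basis $A_j$ furnished by Lemma~\ref{lem:CartanBasis} really corresponds to the standard coordinate system of the $C_r$ or $BC_r$ root system, so that $W$ acts by honest signed permutations of the $\zeta_j$ rather than in some skewed coordinates. This is where the irreducibility hypothesis is essential: it guarantees a single irreducible $C_r$ or $BC_r$ factor, and through Moore's equal-length statement a genuinely $\SymGp_r$-symmetric coordinate system. Without it the root system decomposes and the symmetric power sums $p_k$ fail to generate the full invariant ring, consistent with Remark~\ref{rmk:reducible}.
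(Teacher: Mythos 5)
Your proposal is correct and follows essentially the same route as the paper: reduce to the Hermitian form $X^r$, use irreducibility of the isotropy representation to get simplicity, invoke Moore's result to identify the restricted root system as type $C_r$ or $BC_r$ with Weyl group acting by signed permutations of the $A_j$, and conclude via Newton's identities that the power sums $p_1,\ldots,p_r$ generate the invariant ring. Your added care about verifying that the coordinates $\zeta_j$ adapted to the basis of Lemma~\ref{lem:CartanBasis} are the standard ones for the $C_r$/$BC_r$ system is a reasonable elaboration of a point the paper leaves implicit, but the argument is the same.
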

\begin{proof}
As in the proof of Lemma~\ref{lem:CartanBasis}, consider the Hermitian form $X^r$ of $X$, which yields the same restricted root system. Irreducibiliy of the isotropy representation now implies that $G^d$ is simple. Due to Moore \cite{Mo64}, it then follows that $\Phi(\frakg_\CC,\fraka_\CC)$ is of type $C_r$ or $BC_r$. In either case, the Weyl group $W$ consists of signed permutations of the basis vectors $A_j$ in $\fraka_\CC$. Therefore, $W$-invariant polynomials are symmetric polynomials in $\zeta_1^2,\ldots,\zeta_r^2$, and by Newton's identities it is known that $p_1,\ldots,p_r$ from a set of algebraically independent generators for $\Poly(\fraka_\CC)^W$.
\end{proof}

By means of the Harish-Chandra isomorphism, we define
\[
	D_k:=\gamma^{-1}(p_k)\in\DD_G(X), k\in\NN.
\]
For simple $X$ with irreducible isotropy representation, it follows from Lemma~\ref{lem:generators} that $D_1,\ldots,D_r\in\DD_G(X)$ form a set of algebraically independent generators for $\DD_G(X)$. For later use, we note the following observation, which is a consequence of the homogeneity of the generators $p_1,\ldots, p_r$.

\begin{lemma}\label{lem:lowdegreeoperatros}
	Let $X$ be simple with irreducible isotropy representation, and let $D\in\DD_G(X)$. 
	Then, $D$ has even order $2k$, $k\in\NN$. Moreover, if $k\leq r$, then there exists a unique 
	polynomial $P$ in $k$ variables, such that $D=P(D_1,\ldots,D_k)$.
\end{lemma}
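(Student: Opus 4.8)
The plan is to exploit the Harish-Chandra isomorphism $\gamma\colon\DD_G(X)\to\Poly(\fraka_\CC)^W$ together with the explicit generators $p_1,\ldots,p_r$ from Lemma~\ref{lem:generators}. The key structural input is that $W$ acts by signed permutations of the coordinates $\zeta_1,\ldots,\zeta_r$, so $W$-invariant polynomials are exactly the symmetric polynomials in $\zeta_1^2,\ldots,\zeta_r^2$; moreover each $p_k$ is homogeneous of degree $2k$ in the $\zeta_j$.

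For the first assertion (that every $D\in\DD_G(X)$ has even order), I would recall how the order of a differential operator is visible under $\gamma$. The Harish-Chandra isomorphism is filtration-preserving up to the $\rho$-shift: if $D$ has order $d$, then $\gamma(D)=e^{-\rho}\,Y_\fraka\circ e^\rho$ for a representative $Y\in\sU(\frakg)^H$ of degree $d$, and its top-degree (principal-symbol) part is a homogeneous $W$-invariant polynomial of degree $d$ in $\Poly(\fraka_\CC)$. But every nonzero homogeneous $W$-invariant polynomial has even degree, since $W$ contains the sign changes $\zeta_j\mapsto -\zeta_j$, forcing each variable to appear to even total degree. Hence the leading part, and with it the order $d=2k$ of $D$, must be even.

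For the second assertion, suppose $D$ has order $2k$ with $k\le r$. Then $q:=\gamma(D)\in\Poly(\fraka_\CC)^W$ has top-degree part of degree $2k$, and all lower homogeneous components are themselves $W$-invariant of degree $2k-2,2k-4,\ldots$ (again only even degrees occur). By Lemma~\ref{lem:generators} the $W$-invariants are freely generated by $p_1,\ldots,p_r$, and assigning weight $2i$ to $p_i$ one checks that the homogeneous $W$-invariants of degree $2j$ are spanned by the monomials $p_1^{a_1}\cdots p_r^{a_r}$ with $\sum_i i\,a_i=j$. Since $j\le k\le r$ throughout, every such monomial involves only $p_1,\ldots,p_k$. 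Therefore $q$ lies in the subalgebra generated by $p_1,\ldots,p_k$, i.e.\ $q=P(p_1,\ldots,p_k)$ for a unique polynomial $P$ (uniqueness by the algebraic independence of the $p_i$). Applying $\gamma^{-1}$ and using $D_i=\gamma^{-1}(p_i)$ together with the fact that $\gamma$ is an algebra isomorphism yields $D=P(D_1,\ldots,D_k)$, with $P$ unique since the $D_i$ are algebraically independent.

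The main obstacle to make fully rigorous is the claim that only $p_1,\ldots,p_k$ enter when $k\le r$; this is precisely the weighted-degree bookkeeping that the constraint $k\le r$ is designed to guarantee, and it is where the hypothesis is used. The parity statement is the other place requiring care, namely the standard but important fact that $\gamma$ respects the order filtration up to the $\rho$-shift so that the \emph{order} of $D$ equals the \emph{degree} of the top component of $\gamma(D)$; I would cite the discussion around \eqref{eq:secondisomorphism} and \cite{He84} for this. Both steps are essentially bookkeeping once the $C_r$/$BC_r$ structure of $W$ from Lemma~\ref{lem:generators} is in hand.
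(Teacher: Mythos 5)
Your proposal is correct and follows essentially the same route as the paper: both arguments pass through the Harish-Chandra isomorphism, deduce evenness of the order from the even degrees of the $W$-invariant generators $p_1,\ldots,p_r$ of Lemma~\ref{lem:generators}, and then use a homogeneity/weighted-degree count (the paper phrases it via the scaling $\lambda^{2k}p=P(\lambda^2p_1,\ldots,\lambda^{2r}p_r)$, you via the constraint $\sum_i i\,a_i=j\le k$ on monomials) to conclude that only $p_1,\ldots,p_k$ can occur. The bookkeeping you flag as the remaining obstacle is exactly what the paper carries out, and your version of it is already complete.
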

\begin{proof}
Due to the Harish-Chandra isomorphism, this statement translates to a statement on $W$-invariant polynomials on $\fraka_\CC$, since $\gamma$ relates differential operators of order $d$ with polynomials of degree $d$. Therefore, since the generators $p_1,\ldots, p_r$ have even degrees, a $G$-invariant operator necessarily has even order. Now let $p$ be a $W$-invariant polynomial. Since $p_1,\ldots, p_r$ are algebraically independent, there exists a unique polynomial $P(t_1,\ldots,t_r)$ in $r$ variables, such that $p=P(p_1,\ldots, p_r)$. It remains to show that for $k<r$, $P$ is independent of $t_{k+1},\ldots,t_r$. We may assume that $p$ is homogeneous, since homogeneous components of $W$-invariant polynomials are again $W$-invariant. Since $p_k$ is homogeneous of degree $2k$, it follows that
\begin{align}\label{eq:homogeneous}
	\lambda^{2k}\,p=P(\lambda^2p_1,\lambda^4p_2,\cdots,\lambda^{2r}p_r)\quad
	\text{for all}\quad\lambda\in\CC.
\end{align}
Considering the expansion $P = \sum_{\b i} a_\b i t^\b i$ with multi-indices $\b i=(i_1,\ldots, i_r)$, and comparing the degrees of $\lambda$ in \eqref{eq:homogeneous}, it follows that $a_\b i=0$ if $i_\ell>0$ for some $\ell>k$. Therefore, $P$ just depends on $t_1,\ldots, t_k$.
\end{proof}

%--------------------------------------------------------------------------------------------------
\subsection{Higher Laplacians}
We identify the higher Laplacians with $H$-invariant elements in the universal enveloping algebra $\sU(\frakg)$ along the isomorphism given in \eqref{eq:firstisomorphism}. Recall that the complexified metric \eqref{eq:metric} defines a bilinear pairing of $\frakq_+$ and $\frakq_-$. We fix a basis $(X_\alpha)_{\alpha=1,\ldots,n}$ of $\frakq_+$, and let $(Y_\alpha)_{\alpha=1,\ldots,n}$ denote the basis of $\frakq_-$ dual to $(X_\alpha)_{\alpha=1,\ldots,n}$.

\begin{proposition}
	The algebra isomorphism \eqref{eq:firstisomorphism} identifies the higher Laplacian $L_m$ with
	\[
		L_m=\sum_{\alpha_1,\ldots,\alpha_m=1}^n
			Y_{\alpha_1}\cdots Y_{\alpha_m}X_{\alpha_1}\cdots X_{\alpha_m}
			\mod \sU(\frakg)\frakh_\CC\cap\sU(\frakg)^H.
	\]
\end{proposition}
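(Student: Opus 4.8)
The plan is to compute both operators in the standard realization of sections of a homogeneous bundle as $H$-equivariant functions on $G$, where the covariant Cauchy--Riemann operator and its formal adjoint become right-invariant differential operators; reading off the resulting element of $\sU(\frakg)$ then yields the claim. Throughout, the trivial line bundle $E=X\times\CC$ is the relevant case, so sections are simply right-$H$-invariant functions on $G$.

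First I would implement $\CRb$. On a symmetric space the reductive (canonical) connection of a homogeneous bundle coincides with the Levi--Civita connection, and for a direction $X_\alpha\in\frakq_+$ the covariant derivative is realized on equivariant functions by the right action $\sR_{X_\alpha}$. Since $\frakq_+$ is the anti-holomorphic tangent space $T^{0,1}_o$, the operator $\overline\partial$ is implemented by $\sum_\alpha \sR_{X_\alpha}\otimes\omega^\alpha$ with $\omega^\alpha\in(\frakq_+)^*$ the basis dual to $X_\alpha$; the isomorphism $h_*$ sends $\omega^\alpha$ to the vector $Y_\alpha\in\frakq_-\cong T^{1,0}_o$ determined by $(X_\beta|Y_\alpha)=\delta_{\alpha\beta}$, so that $\CRb f\leftrightarrow\sum_\alpha(\sR_{X_\alpha}f)\,Y_\alpha$. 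The crucial simplification is that $\frakq_+$ is abelian and the connection is canonical, so iterating $\CRb$ produces neither curvature nor ordering corrections: the tensor slots $Y_{\alpha_i}$ are frozen and $\sR$ only differentiates the scalar coefficient. This gives
\[
  \CRb^m f\ \leftrightarrow\ \sum_{\alpha_1,\ldots,\alpha_m}
  (\sR_{X_{\alpha_1}}\cdots\sR_{X_{\alpha_m}}f)\;
  Y_{\alpha_1}\otimes\cdots\otimes Y_{\alpha_m}.
\]

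Next I would compute $\CR^m=(\CRb^m)^*$. Since $G$ is semisimple, hence unimodular, and $\omega_X$ is $G$-invariant, integration by parts yields the \emph{exact} relation $\sR_X^*=-\sR_{\overline X}$ on functions, with no zeroth-order correction; as $\overline{\frakq_+}=\frakq_-$, the adjoint of each anti-holomorphic derivative $\sR_{X_{\alpha_i}}$ is $-\sR_{\overline{X_{\alpha_i}}}$ with $\overline{X_{\alpha_i}}\in\frakq_-$. Pairing $\CRb^m f$ against a section of $E\otimes(T^{1,0})^{\otimes m}$ with the $G$-invariant Hermitian structure built from $h(\,\cdot\,,\overline{\,\cdot\,})$ and contracting the frozen slots $Y_{\alpha_1}\otimes\cdots\otimes Y_{\alpha_m}$, the conjugate vectors $\overline{X_{\alpha_i}}$ are reindexed, through the compatibility of $h$ with the pairing $(\,|\,)$, into the dual vectors $Y_{\alpha_i}$. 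Each of the $m$ integrations by parts contributes a sign, and since each block consists of commuting elements we obtain
\[
  \CR^m\CRb^m f\ \leftrightarrow\ (-1)^m\!\!\sum_{\alpha_1,\ldots,\alpha_m}\!\!
  \sR_{Y_{\alpha_1}}\cdots\sR_{Y_{\alpha_m}}\sR_{X_{\alpha_1}}\cdots\sR_{X_{\alpha_m}}f,
\]
so that $L_m=(-1)^m\CR^m\CRb^m$ is represented by $\sum Y_{\alpha_1}\cdots Y_{\alpha_m}X_{\alpha_1}\cdots X_{\alpha_m}$ modulo $\sU(\frakg)\frakh_\CC\cap\sU(\frakg)^H$.

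The main obstacle is the honest bookkeeping in this adjoint step: the frame $Y_{\alpha_1}\otimes\cdots\otimes Y_{\alpha_m}$ and the fibre metric are not parallel along $\frakq_-$, so integration by parts a priori generates connection terms in which a factor $\sR_{Y_{\alpha_i}}$ differentiates the frame rather than $f$. I would control these by working $H$-equivariantly: each such term carries a factor coming from the isotropy ($\frakh_\CC$) action on $\frakq_-^{\otimes m}$, annihilates equivariant scalar functions, and hence lies in the kernel of $\sR$, which is precisely the freedom encoded by the quotient $\sU(\frakg)\frakh_\CC\cap\sU(\frakg)^H$. As a consistency check on the leading part, one may compare principal symbols at the base point $o$: in K\"ahler normal coordinates the Christoffel symbols and the first derivatives of $h$ and of $\omega_X$ vanish, so both operators have the same order-$2m$ symbol, namely the contraction of $m$ holomorphic and $m$ anti-holomorphic derivatives against the inverse metric, matching the $\sU(\frakg)$-computation above.
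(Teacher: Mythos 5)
Your argument is correct and follows essentially the same route as the paper: realize sections as $H$-equivariant functions on $G$, express $\CRb^m$ as $\sum\sR_{X_{\alpha_1}}\cdots\sR_{X_{\alpha_m}}f$ tensored with the dual vectors $Y_{\alpha_i}$ (the absence of correction terms being guaranteed because $\frakq_+$ acts trivially on the canonical fibre $\frakq_-^{\otimes m}$), compute the adjoint via $\sR_X^*=-\sR_{\overline X}$, and contract using $Y=\sum_\alpha(Y|\overline{Y_\alpha})\overline{X_\alpha}$ and commutativity of $\frakq_-$. The only cosmetic difference is that you derive the $\bar\partial$-formula and the adjoint formula inline (canonical connection, unimodularity), where the paper cites Tirao--Wolf and Wallach for the same facts.
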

\begin{proof}
We consider the action of $L_m$ on smooth functions $f$, represented as right $H$-invariant functions on $G$. We first recall some basic facts about holomorphic vector bundles on $X$. A smooth $G$-homogeneous vector bundle $E$ can be realized as a fibered product $E=G\times^H E_o$, where the canonical fiber $E_o$ is an $H$-module. This also defines a representation of $\frakh_\CC$ on $E_o$, denoted by $\tau\colon\frakh_\CC\to\End(E_o)$. According to \cite{TW70}, a holomorphic structure on $E$ corresponds to an extension of $\tau$ to a representation of $\frakh_\CC\oplus\frakq_+\subseteq\frakg_\CC$. Moreover, a smooth section in $E$ is represented by a smooth map $f\colon G\to E_o$ satisfying $f(gh)=h^{-1}.f(g)$ for all $g\in G$, $h\in H$, and $f$ is holomorphic if and only if $\sR_Xf(g)+X.f(g)=0$ for all $g\in G$, $X\in\frakq_-$. It follows, that the del-bar operator $\bar\partial$ on $E$ is given by
\[
	\bar\partial f\colon G\to E\otimes(\frakq_+)^*,\
	\bar\partial f(g)(X)=\sR_X f(g)+\tau(X)f(g),\quad X\in\frakq_+.
\]
In case of the holomorphic and the anti-holomorphic tangent bundles,
\[
	T^{1,0} = G\times^H\frakq_-,\quad T^{0,1} = G\times^H\frakq_+,
\]
the action of $H$ on the canonical fibers $\frakq_+$ and $\frakq_-$ is given by the adjoint representation, and $\frakq_+$ acts trivially in both cases. The same holds for the $m$'th tensor product $(T^{1,0})^{\otimes m}$ with canonical fiber $\frakq_-^{\otimes m}$. Therefore, the del-bar operator on a section in $(T^{1,0})^{\otimes m}$ simplifies to $\bar\partial f(g)(X)=\sR_Xf(g)$, and it follows that the action of the Cauchy--Riemann operator $\CRb$ on $f$ is given by
\[
	\CRb f(g)=\sum_{\alpha=1}^n\sR_{X_\alpha}f(g)\otimes Y_\alpha.
\]
As a consequence, the $m$'th iterate of $\CRb$ on a function $f\in C^\infty(X)$ reads
\begin{align}\label{eq:CRoperatorformula}
	\CRb^mf(g)
		= \sum_{\alpha_1,\ldots,\alpha_m=1}^n\sR_{X_{\alpha_1}}\cdots\sR_{X_{\alpha_m}}f(g)
			\,Y_{\alpha_m}\otimes\cdots\otimes Y_{\alpha_1}.
\end{align}
It remains to determine the formal adjoint of $\CRb^m$ defined by \eqref{eq:formaladjoint}. Recall from \cite[\S\,5.5]{Wa73} the following result: Let $E=G\times^H E_o$ and $F=G\times^H F_o$ be $G$-homogeneous vector bundles on $X$, and fix $H$-invariant non-degenerate Hermitian forms on $E_o$ and $F_o$, which define non-degenerate Hermitian forms on $E$ and $F$. The natural action of an element $D\in (\End(E_o,F_o)\otimes\sU(\frakg))^H$ on $E_o$-valued smooth maps on $G$ defines a differential operator from  $C^\infty(X,E)$ to $C^\infty(X,F)$. Then, the adjoint of $D$ with respect to the Hermitian forms on $E$ and $F$ is given by $D^*=(\eta\otimes\delta) D$, where $\eta(T):=T^*$ is the adjoint of $T\in\End(E_o,F_o)$ with respect to the Hermitian forms on $E_o$ and $F_o$, and $\delta$ is the antiautomorphism of $\sU(\frakg)$ which extends $\delta(X)=-\overline X$ on $\frakg_\CC$.\\
In our situation, $E_o=\CC$ with standard Hermitian form, and $F_o=\frakq_-^{\otimes m}$ with Hermitian form defined by
\[
	\langle Y_1\otimes\cdots\otimes Y_m|Y_1'\otimes\cdots\otimes Y_m'\rangle
	:=(Y_1|\overline{Y_1'})\cdots(Y_m|\overline{Y_m'}).
\]
It follows, that the adjoint $\CR^m$ of $\CRb^m$ acts on $f=\sum f_{\beta_1\ldots\beta_m}Y_{\beta_1}\otimes\cdots\otimes Y_{\beta_m}$ by the formula
\begin{align*}
	\CR^mf(g) &= (-1)^m\sum_{\substack{\alpha_1,\ldots,\alpha_m\\\beta_1,\ldots,\beta_m}}
		\sR_{\overline X_{\alpha_m}}\cdots\sR_{\overline X_{\alpha_1}}f_{\beta_1\ldots\beta_m}(g)
		(Y_{\beta_1}|\overline{Y_{\alpha_1}})\cdots(Y_{\beta_m}|\overline{Y_{\alpha_m}})\\
		&= (-1)^m\sum_{\beta_1,\ldots,\beta_m}\sR_{Y_{\beta_m}}\cdots\sR_{Y_{\beta_1}} 
				f_{\beta_1\cdots\beta_m}.
\end{align*}
Here, we used the standard formula $Y=\sum_{\alpha}(Y|\overline Y_{\alpha})\overline X_\alpha$ valid for any $Y\in\frakq_+$. Since $\frakq_-$ is abelian, we may freely change the order of the operators $\sR_{Y_{\beta_i}}$. In combination with \eqref{eq:CRoperatorformula}, this proves the formula for the higher Laplacian $L_m$.
\end{proof}

%==================================================================================================
\section{A combinatorial model}\label{sec:combinatorialmodel}
As we have seen in the last section, the higher Laplacian $L_m$ is represented within the universal enveloping algebra $\sU(\frakg)$ by the element
\[
	\LL_m	= \sum_{\alpha_1,\ldots,\alpha_m=1}^n
					Y_{\alpha_1}\cdots Y_{\alpha_m}X_{\alpha_1}\cdots X_{\alpha_m},
\]
where $(X_\alpha)_{\alpha}$ is a fixed basis of $\frakq_+$, and $(Y_{\alpha})_\alpha$ is the dual basis of $\frakq_-$ with respect to \eqref{eq:metric}. In this formula, $H$-invariance of $\LL_m$ is ensured by taking sums over pairs of dual vectors $X_{\alpha_j}$ and $Y_{\alpha_j}$. Any permutation of the symbols involved in $\LL_m$ also yields to an $H$-invariant element in $\sU(\frakg)$, and hence to a $G$-invariant differential operator on $X$, which in general essentially differs from $L_m$. More generally, we could consider commutators of various of the symbols in $\LL_m$ and still obtain an $H$-invariant element. It is our goal in this section to study this class of $H$-invariant elements, and the corresponding $G$-invariant differential operators. We first formally define the class of elements under consideration.

%--------------------------------------------------------------------------------------------------
\subsection{Formal language}
We introduce a formal language consisting of symbols, letters and words, which then determines a class of $G$-invariant operators containing the higher Laplacian $L_m$. Throughout this section, we fix $m\in\NN$. Let $\sS$ be the set of \emph{symbols}, defined by
\[
	\sS:=\sS_+\cup\sS_-\quad\text{with}\quad
	\sS_+:=\{x_1,\ldots,x_m\},\quad
	\sS_-:=\{y_1,\ldots,y_m\}.
\]
Based on these symbols, we recursively define two types of \emph{letters}, $\sL=\sL_+\cup\sL_-$, consisting of nested tuples of distinct symbols:
\[
	S_\pm\subseteq\sL_\pm,\quad\text{and}\quad
	(\lambda,\mu,\lambda')\in\sL_\pm\text{ for } \lambda,\lambda'\in\sL_\pm,\ \mu\in\sL_\mp,
\]
if $\lambda,\mu,\lambda'$ consist of distinct symbols. For convenience, we set $\TP{\lambda}{\mu}{\lambda'}:=(\lambda,\mu,\lambda')$. Let $\sgn(\lambda)$ denote the \emph{sign} of the letter $\lambda$, i.e.\ $\lambda\in\sL_{\sgn(\lambda)}$. We note that $\sL$ is a finite set, e.g.\ $m=2$ yields
\[
	\sL_+ = \{x_1,x_2,\TP{x_1}{y_1}{x_2},\TP{x_1}{y_2}{x_1},\TP{x_2}{y_1}{x_1},\TP{x_2}{y_2}{x_1}\},
\]
and $\sL_-$ is obtain from $\sL_+$ by interchanging $x$- and $y$-symbols. For $m>2$, the brackets become important, e.g.
\[
	\TP{x_1}{\TP{y_1}{x_2}{y_2}}{x_3}\neq\TP{\TP{x_1}{y_1}{x_2}}{y_2}{x_3}.
\]
Omitting all brackets, letters are sequences of distinct symbols of alternating sign.

A \emph{word} is a finite sequence of letters, $w = \lambda_1\cdots\lambda_t$ with $\lambda_j\in\sL$. The number of letters in $w$ is called the \emph{length} of $w$ and denoted by $\ell(w)$. The main object of study is the set $\sW_m$ consisting of words in which each symbol occurs at most once, and for $1\leq j\leq m$, the symbol $x_j$ occurs if and only if $y_j$ occurs. Let $J_w$ denote the set of indices $j\in\{1,\ldots,m\}$ such that $x_j$ and $y_j$ occur in $w$.  We note that any word $w\in\sW_m$ has even length. Let $\ZZ\sW_m$ denote the free abelian group generated by $\sW_m$.

We now relate $\ZZ\sW_m$ to the universal enveloping algebra $\sU(\frakg)$ by the following procedure: Set
\[
	\TP{Z_1}{Z_2}{Z_3}:=[[Z_1,Z_2],Z_3]\quad\text{for}\quad
	Z_1,Z_2,Z_3\in\frakg,
\]
and associate to $w=w(x_1,\ldots,x_m,y_1,\ldots,y_m)\in\sW_m$ the element 
\[
	\Op(w) := \frac{1}{n^{m-|J_w|}}
		\sum_{\alpha_1,\ldots,\alpha_m=1}^n w(X_{\alpha_1},\ldots 
				X_{\alpha_m},Y_{\alpha_1},\ldots,Y_{\alpha_m})\in\sU(\frakg),
\]
i.e., replace in $w$ the symbols $x_j$ and $y_j$ by the basis elements $X_{\alpha_j}$ and $Y_{\alpha_j}$, and sum over all indices. Here, the factor $n^{|J_w|-m}$ compensates the overcounting in case that $w$ does not involve all symbols. 

It is straightforward to see that $\Op(w)$ is independent of the choice of the mutually dual bases $(X_\alpha)_\alpha$ of $\frakq_+$ and $(Y_{\alpha})_\alpha$ of $\frakq_-$, and it follows, that $\Op(\sigma)$ is in fact an $H$-invariant element. By $\ZZ$-linear extension, we thus obtain the $\ZZ$-module homomorphism
\[
	\Op\colon\ZZ\sW_m\to\sU(\frakg)^H,\ w\mapsto\Op(w),
\]
and according to \eqref{eq:firstisomorphism} we may interpret $\Op(w)$ as $G$-invariant differential operator on $X=G/H$. In particular, we note that
\[
	L_m = \Op(w_m)\quad\text{with}\quad w_m := y_1\cdots y_mx_1\cdots x_m.
\]
For any word $w\in\sW_m$, the order of the differential operator $\Op(w)$ is obviously bounded by the length $\ell(w)$ of the word.

\begin{remark}
	In the process of replacing the symbols $x_j$ and $y_j$ by the basis elements $X_{\alpha_j}$ and 
	$Y_{\alpha_j}$, respectively, the letters of a word are turned into commutators of elements in 
	$\frakq_+$ and $\frakq_-$. Recall that $\frakg=\frakq_-\oplus\frakh_\CC\oplus\frakq_+$ is a 
	grading. We thus obtain the commutation relations
	\begin{align}\label{eq:tripleproduct}
		\TP{\frakq_\pm}{\frakq_\mp}{\frakq_\pm}=[[\frakq_\pm,\frakq_\mp],\frakq_\pm]
		\subseteq\frakq_\pm,
	\end{align}
	i.e., a letter $\lambda$ is turned into an element in $\frakq_{\sgn(\lambda)}$. In fact, the 
	product \eqref{eq:tripleproduct} is the starting point for an algebraic description of Hermitian 
	symmetric spaces via so called Jordan theory. We refer to \cite{Lo77} and \cite{Sat80} for a 
	detailed description of this approach.
\end{remark}

%--------------------------------------------------------------------------------------------------
\subsection{Relations and graphs}
We introduce relations on words that reflect some of the identities valid in the universal enveloping algebra. Let $R\subseteq\ZZ\sW_m$ be the submodule generated by the following relations, i.e., $R$ is the minimal submodule such that the following identities hold modulo $R$:
\begin{enumerate}[(R1)]
	\item \emph{Commutation relation I}\\
				Letters of the same sign commute, i.e.,
				\begin{align*}
					w\lambda\lambda'w' \equiv w\lambda'\lambda w'\mod R
				\end{align*}
				for all words $w,w'$ and letters $\lambda,\lambda'\in\sL$ with 
				$\sgn(\lambda)=\sgn(\lambda')$.
				\vspace{3mm}
	\item \emph{Commutation relation II}\\
				Letters of different sign satisfy
				\begin{align*}
					w\lambda\mu\lambda_1\cdots\lambda_t
						\equiv w\mu\lambda\lambda_1\cdots\lambda_t+
							\sum_{j=1}^t w\lambda_1\cdots[\lambda\mu\lambda_j]\cdots\lambda_t\mod R,
				\end{align*}
				for each word $w$ and letters $\lambda,\mu,\lambda_1,\ldots,\lambda_t\in\sL$
				with $\sgn(\lambda)\neq\sgn(\mu)$, where
				\begin{align*}
					[\lambda\mu\lambda_j] := \begin{cases}
						\TP{\lambda}{\mu}{\lambda_j} &\text{ if }\sgn(\lambda_j)=\sgn(\lambda),\\
						-\TP{\mu}{\lambda}{\lambda_j} &\text{ if }\sgn(\lambda_j)\neq\sgn(\lambda).
					\end{cases}
				\end{align*}
	\item \emph{Permutation symmetry}\\
				If $w\in\sW_m$ and $\tau\in\SymGp_m$ is a permutation of $\{1,\ldots,m\}$, then
				\begin{align*}
					w(x_1,&\ldots,x_m,y_1,\ldots,y_m)\\
						&\equiv w(x_{\tau(1)},\ldots,x_{\tau(m)},y_{\tau(1)},\ldots,y_{\tau(m)})\mod R.
				\end{align*}
\end{enumerate}

These relations are compatible with the structure of the universal enveloping algebra $\sU(\frakg)$.

\begin{lemma}\label{lem:relations}
	Let $w_1,w_2\in\ZZ\sW_m$. Then, $w_1\equiv w_2\mod R$ implies
	\[
		\Op(w_1)\equiv\Op(w_2)\mod\sU(\frakg)\frakh_\CC\cap\sU(\frakg)^H.
	\]
\end{lemma}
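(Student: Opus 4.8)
The plan is to show that each of the three defining relations of $R$, namely (R1), (R2) and the permutation symmetry (R3), is compatible with the map $\Op$ in the sense of the Lemma. Since $R$ is by definition the minimal submodule generated by these relations, and since $\Op$ is a $\ZZ$-module homomorphism into $\sU(\frakg)^H$, it suffices to verify that whenever $w_1 \equiv w_2$ is an instance of one of (R1)--(R3), the images satisfy $\Op(w_1) - \Op(w_2) \in \sU(\frakg)\frakh_\CC \cap \sU(\frakg)^H$. By $\ZZ$-linearity this then extends to all of $R$. I would treat the three relations in turn, the first two being genuine identities in $\sU(\frakg)$ (so that $\Op(w_1)-\Op(w_2)$ is actually zero, not merely zero modulo the ideal), and the third being a consequence of the symmetry of the summation defining $\Op$.

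For (R1), I would observe that under $\Op$ a letter $\lambda$ of sign $\pm$ becomes an element of $\frakq_{\pm}$ by the commutation relation \eqref{eq:tripleproduct}, and that $\frakq_+$ and $\frakq_-$ are each abelian subalgebras. Hence two letters of the same sign map to commuting elements of $\sU(\frakg)$, and swapping them leaves $\Op$ unchanged exactly. For (R2), the key point is the Leibniz-type expansion of a commutator across a product: for $Z \in \frakq_{\sgn(\lambda)}$, $W \in \frakq_{\sgn(\mu)}$ and a product $P = P_1 \cdots P_t$ of elements $P_j \in \frakq_{\sgn(\lambda_j)}$, one has in $\sU(\frakg)$ the identity
\begin{align*}
	[Z,W]\, P = W Z P - Z W P + [[Z,W],P],
\end{align*}
and $[[Z,W],P] = \sum_{j=1}^t P_1 \cdots [[Z,W],P_j] \cdots P_t$ since $[Z,W] \in \frakh_\CC$ acts on $\frakq_\pm$ by the adjoint action. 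I would then check that $[[Z,W],P_j]$ lands in $\frakq_{\sgn(\lambda_j)}$ and that its value matches the sign-dependent triple product $[\lambda\mu\lambda_j]$ declared in (R2): when $\sgn(\lambda_j)=\sgn(\lambda)$ the bracket $[[Z,W],P_j]$ is represented by $\TP{\lambda}{\mu}{\lambda_j}$, while when $\sgn(\lambda_j)\neq\sgn(\lambda)$ one uses the Jacobi identity to rewrite $[[Z,W],P_j] = [[Z,P_j],W] - [[W,P_j],Z]$ and the grading to discard the first term, producing the sign and the swapped bracket $-\TP{\mu}{\lambda}{\lambda_j}$. This bookkeeping is where the sign conventions must be matched carefully against the definition of $[\lambda\mu\lambda_j]$.

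For the permutation symmetry (R3), I would argue that reindexing the summation variables $\alpha_1,\ldots,\alpha_m$ by a permutation $\tau$ simultaneously permutes the dual pairs $(X_{\alpha_j},Y_{\alpha_j})$, and since the sum in $\Op(w)$ runs independently over all indices, the substitution $x_j \mapsto x_{\tau(j)}$, $y_j \mapsto y_{\tau(j)}$ does not change the value of $\Op$; the normalizing factor $n^{|J_w|-m}$ depends only on $J_w$, which is invariant under relabeling, so it is unaffected. I expect the main obstacle to be the (R2) verification: one must track the grading of each $\frakq_\pm$ factor through the iterated commutators and confirm that exactly the terms prescribed by the case distinction in $[\lambda\mu\lambda_j]$ survive, while the terms forbidden by the grading vanish. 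Once these three compatibilities are established, the Lemma follows by applying $\Op$ to a generating set of $R$ and invoking linearity, noting throughout that all images lie in $\sU(\frakg)^H$ so that the equivalence is genuinely modulo $\sU(\frakg)\frakh_\CC \cap \sU(\frakg)^H$.
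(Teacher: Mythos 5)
Your overall strategy coincides with the paper's: reduce to the three generating relations, handle (R1) via the abelianness of $\frakq_\pm$, handle (R2) by expanding the commutator $[Z,W]\in\frakh_\CC$ across the remaining product via the derivation property of the adjoint action, and handle (R3) by reindexing the summation (the paper also inserts a small reduction from $\Op$ to the single-term operators $\Op'(w)=w(X_1,\ldots,Y_m)$, which you do implicitly by working with fixed elements $Z,W,P_j$). However, there is a genuine error at precisely the point where the lemma's ``modulo $\sU(\frakg)\frakh_\CC$'' enters. You claim that (R1) and (R2) become exact identities in $\sU(\frakg)$, and your displayed formula $[Z,W]P=WZP-ZWP+[[Z,W],P]$ is false as written (the left side equals $ZWP-WZP$, not its negative, and is not equal to the derivation term alone). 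The correct statement is
\begin{align*}
ZWP_1\cdots P_t \;=\; WZP_1\cdots P_t \;+\;\sum_{j=1}^t P_1\cdots[[Z,W],P_j]\cdots P_t\;+\;P_1\cdots P_t\,[Z,W],
\end{align*}
and the final term $P_1\cdots P_t\,[Z,W]$ does \emph{not} vanish in general; it lies in $\sU(\frakg)\frakh_\CC$ because $[Z,W]\in\frakh_\CC$, and discarding it is exactly the reason the conclusion is a congruence modulo $\sU(\frakg)\frakh_\CC\cap\sU(\frakg)^H$ rather than an equality. So (R2) is compatible with $\Op$ only modulo the ideal, and your argument must record the leftover right-hand factor explicitly; only (R1) and (R3) give exact identities.

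Two smaller points. First, the case distinction in $[\lambda\mu\lambda_j]$ for $\sgn(\lambda_j)\neq\sgn(\lambda)$ needs no Jacobi identity: $-\TP{\mu}{\lambda}{\lambda_j}=-[[W,Z],P_j]=[[Z,W],P_j]$ is just antisymmetry of the bracket, the rewriting being forced only by the requirement that $(\mu,\lambda,\lambda_j)$ be a well-formed letter. As you have set up the Jacobi route, the term that vanishes by the grading is $[[W,P_j],Z]$ (since $W$ and $P_j$ lie in the same abelian subalgebra $\frakq_{\sgn(\mu)}$), not the term $[[Z,P_j],W]$ you propose to discard. Second, your treatment of (R1) and (R3) is correct and matches the paper. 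With the (R2) bookkeeping repaired as above, the proof goes through.
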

\begin{proof}
It suffices to check this statement for each of the relations (R1), (R2) and (R3). For the third relation (R3), this is obvious from the definition of $\Op$. Fix any set of elements $X_1,\ldots,X_m\in\frakq_+$ and $Y_1,\ldots,Y_m\in\frakq_-$ and for any word $w=w(x_1,\ldots,y_m)$, consider the element
\[
	\Op'(w):=w(X_1,\ldots,Y_m)\in\sU(\frakg).
\]
Since $\Op(w)$ is a linear combination of operators of type $\Op'$, it suffices to prove the statement for (R1) and (R2) with $\Op$ replaced by $\Op'$. We note that
\[
	\Op'(\lambda_1\cdots\lambda_t) =\Op'(\lambda_1)\cdots\Op'(\lambda_t).
\]
We first consider (R1), so let $\lambda,\lambda'$ be letters of equal sign. Note that a letter $\lambda$ is mapped by $\Op'$ onto an element in $\frakq_{\sgn(\lambda)}$. Since $\frakq_+$ and $\frakq_-$ are abelian, this implies that $\Op'(\lambda)$ and $\Op'(\lambda')$ commute in $\sU(\frakg)$. Now consider (R2), and let $\lambda$ and $\mu$ have different signs. Then,
\begin{align*}
	\Op'(w\lambda\mu\lambda_1\cdots\lambda_t)
	&= \Op'(w)\Op'(\lambda)\Op'(\mu)\Op'(\lambda_1)\cdots\Op'(\lambda_t)\\
	&=\Op'(w)\Op'(\mu)\Op'(\lambda)\Op'(\lambda_1)\cdots\Op'(\lambda_t)\\
	&\quad+\Op'(w)[\Op'(\lambda),\Op'(\mu)]\Op'(\lambda_1)\cdots\Op'(\lambda_t).
\end{align*}
Since $[\frakq_+,\frakq_-]\subseteq\frakh_\CC$, the commutator $[\Op'(\lambda),\Op'(\mu)]$ is an element of $\frakh_\CC$. Commuting this element successively to the right of the expression, yields the statement.
\end{proof}

We next associate to each word $w=\lambda_1\ldots\lambda_{\ell(w)}\in\sW_m$ a (pseudo-) graph $\Gamma_w=(\sV_w,\sE_w,\epsilon_w)$ with the following data: 
The set $\sV_w$ of vertices consists of the letters that constitute $w$, 
\[
	\sV_w = \{\lambda_1,\ldots,\lambda_{\ell(w)}\}.
\]
For the definition of edges, recall that $J_w\subseteq\{1,\ldots,m\}$ is the set of indices such that $w$ contains $x_j$ and $y_j$. Set $\sE_w:=J_w$, and let $j\in\sE_w$ represent the edge that connects the letter $\lambda_{s(j)}$ which contains $x_j$ with the letter $\lambda_{t(j)}$ which contains $y_j$. Formally, this is encoded in the map
\[
	\epsilon_w\colon\sE_w\to\set{\{\lambda,\lambda'\}}{\lambda,\lambda'\in\sV_w},\
	j\mapsto\{\lambda_{s(j)},\lambda_{t(j)}\}.
\]
The graph $\Gamma_w$ thus contains $1\leq |J_w|\leq m$ edges. As an example, the graphs associated to the words $w=y_1y_2x_1x_2$ and $w'=x_1\TP{y_1}{x_2}{y_2}$ are given by
\tikzset{inner sep=0pt, vertex/.style={circle,draw,minimum size=5pt,thick}}
\begin{center}
	\begin{tikzpicture}[thick]
		\tikzstyle{every label}=[text height=1.2 em]
  	\clip (-0.5,-0.8) rectangle (3.5,1);
  	\node[vertex, label=below:$y_1$] (L1) at (0,0) {};
  	\node[vertex, label=below:$y_2$] (L2) at (1,0) {};
  	\node[vertex, label=below:$x_1$] (L3) at (2,0) {};
  	\node[vertex, label=below:$x_2$] (L4) at (3,0) {};
		\draw (L1) to [out=45,in=135]  node [above=3pt] {1} (L3);
		\draw (L2) to [out=45,in=135]  node [above=3pt] {2} (L4);
	\end{tikzpicture}\hspace{1cm}
	\begin{tikzpicture}[thick]
		\tikzstyle{every label}=[text height=1.2 em]
		\clip (-0.5,-0.8) rectangle (3.5,1);
  	\node[vertex, label=below:$x_1$] (L1) at (0,0) {};
  	\node[vertex, label=below:$\TP{y_1}{x_2}{y_2}.$] (L2) at (2,0) {};
		\draw (L1) to [out=45,in=135]  node [above=3pt] {1} (L2);
  	\draw (L2) .. controls +(0:2) and +(90:2) .. node [right=6pt] {2} (L2);
	\end{tikzpicture}
\end{center}
We call $w\in\sW_m$ \emph{irreducible}, if the graph $\Gamma_w$ is connected, and $w$ is called \emph{factorized}, if it is the product of irreducible words, i.e.,
\[
	w=w_1\cdots w_N\quad\text{ with irreducible $w_j\in\sW_m$}.
\]
Let $\sW_m^\fac\subseteq\sW_m$ denote the subset of factorized words in $\sW_m$. Not every word is factorized, as e.g.\ the word $w_m = y_1\cdots y_m x_1\cdots x_m$, associated to the higher Laplacian $L_m$, illustrates. If $w\in\sW_m$ is a product $w=w_1w_2$ of (not necessarily factorized) words $w_1,w_2\in\sW_m$, it is straightforward to check that
\[
	\Op(w) = \Op(w_1)\Op(w_2).
\]
We therefore aim to factorize words via the relations defined above.

\begin{proposition}\label{prop:factorization}
	Every word $w\in\sW_m$ is a $\ZZ$-linear combination of factorized words modulo $R$, i.e.\ 
	$\ZZ\sW_m = \ZZ\sW_m^\fac+R$.
\end{proposition}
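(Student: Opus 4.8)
The plan is to prove the stronger statement that, modulo $R$, the letters of any word $w\in\sW_m$ may be permuted arbitrarily at the cost of words of strictly smaller length, and then to choose the permutation that groups together the letters belonging to each connected component of $\Gamma_w$. I would argue by strong induction on the length $\ell(w)$.

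The key step is a free-rearrangement lemma: for any two adjacent letters of $w$, I claim
\[
	w\equiv (\text{$w$ with these two letters transposed}) + (\text{shorter words})\mod R,
\]
where ``shorter words'' denotes a $\ZZ$-linear combination of elements of $\sW_m$ of length $<\ell(w)$. If the two letters have equal sign this is exactly relation (R1), with no correction term. If they have opposite sign, I apply (R2) with the prefix as $w$, the transposed pair as $\lambda\mu$, and the remaining suffix as $\lambda_1\cdots\lambda_t$; the main term is the transposed word, while each correction term replaces the three letters $\lambda,\mu,\lambda_j$ by the single bracketed letter $[\lambda\mu\lambda_j]$, decreasing the length by two. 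I would then verify that these correction words still lie in $\sW_m$: the merged bracket is a legitimate letter since $\lambda,\mu,\lambda_j$ consist of pairwise distinct symbols, and the balancing condition that $x_j$ occurs if and only if $y_j$ occurs is preserved because symbols are merely regrouped. Composing adjacent transpositions then realizes an arbitrary permutation of the letters of $w$ modulo $R$ and modulo words of length $<\ell(w)$.

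With this in hand, I would select the permutation that lists first all letters lying in the connected component $C_1$ of $\Gamma_w$, then those in $C_2$, and so on, writing $w'$ for the resulting word, so that the rearrangement lemma yields $w\equiv w'+(\text{shorter words})\mod R$. The word $w'$ is factorized: each block is the subword spanned by a single connected component, whose associated graph is exactly that component and hence connected, so the block is irreducible; moreover each block lies in $\sW_m$, since an edge $j$ joins the letters carrying $x_j$ and $y_j$ and thereby forces both symbols into the same component. Thus $w'\in\sW_m^\fac$, while the shorter words lie in $\ZZ\sW_m^\fac+R$ by the induction hypothesis, whence $w\in\ZZ\sW_m^\fac+R$. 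The base of the induction is immediate, as words of minimal length consist of a single connected component and are already irreducible.

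The main obstacle, and the point I would treat most carefully, is the bookkeeping that makes the induction well-founded: one must confirm that every correction term produced by (R2) is genuinely a word in $\sW_m$ of strictly smaller length, so that regrouping symbols into a bracketed letter violates neither the distinctness of the symbols nor the balance condition, and one must check that after the sorting permutation the connected components of $\Gamma_w$ really do occupy consecutive blocks, so that $w'$ is a product of irreducible words in the precise sense of the definition. I expect that relation (R3) is not needed for this proposition; only (R1) and (R2) enter the argument.
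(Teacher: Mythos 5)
Your proposal is correct and follows essentially the same route as the paper: induction on the length, using (R1) and (R2) to realize an arbitrary permutation of the letters modulo words of strictly smaller length, and then choosing the permutation that sorts the letters by connected components of $\Gamma_w$. The bookkeeping you flag (that each (R2) correction term is a genuine word in $\sW_m$ of length $\ell(w)-2$, and that the sorted blocks are irreducible words in $\sW_m$) is accurate and is left implicit in the paper's shorter argument; the paper likewise does not use (R3) here.
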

\begin{proof}
We prove this by induction on the length of $w\in\sW_m$. Recall that the length of a word is always an even number. Any word $w$ of length two obviously is factorized, in fact it is irreducible, since letters have odd degree and hence there must be an edge connecting the first with the second letter. Now let $w=\lambda_1\cdots\lambda_L$ be of length $L=\ell(w)>2$, and let $\Gamma_w = \bigcup_{i=1}^N\Gamma_i$ be the decomposition of $\Gamma_w$ into connected components $\Gamma_i = (\sV_i,\sE_i,\epsilon_i)$. Then, $w$ is factorized if and only if the vertices $\sV_i$ in each component are adjacent letters in $w$, i.e.,\ there exist $0=t_0< t_1<t_2\cdots< t_N=L$ such that $\sV_i = \set{\lambda_j}{t_{i-1}\leq j\leq t_i}$ (up to a permutation of the connected components). Due to (R1) and (R2), 
\[
	w\equiv \lambda_{\tau(1)}\cdots \lambda_{\tau(L)} + \text{words of length $<L$}\mod R
\]
for all permutations $\tau\in\SymGp_L$ of $\{1,\ldots,L\}$. For suitable $\tau$, this transforms $w$ into a factorized word modulo words of shorter length. Applying the induction hypothesis, this proves the statement.
\end{proof}

%--------------------------------------------------------------------------------------------------
\subsection{Main result}
We are now prepared to prove our main theorem. Recall the system $D_1,\ldots,D_r$ of algebraically independent generators for $\DD(G/H)$ defined in Section~\ref{subsec:generators}.

\begin{theorem}\label{thm:higherlaplacians}
	Let $X$ be a simple pseudo-Hermitian symmetric space of rank $r$ with irreducible isotropy 
	representation. For $m\in\NN$, the higher Laplacian $L_m$ is given by
	\begin{align*}
		L_{2k+1} &= c_k D_{k+1} + \text{polynomial in $D_1,\ldots, D_k$},
	\shortintertext{if $m=2k+1$ is odd, where $c_k$ is a non-zero constant, and}
		L_{2k} &= \text{polynomial in $D_1,\ldots, D_k$},
	\end{align*}
	if $m=2k$ is even.
\end{theorem}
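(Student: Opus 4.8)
The plan is to read the theorem off from the power-sum expansion of the Harish--Chandra image $\gamma(L_m)$, using the factorization of Proposition~\ref{prop:factorization} to reduce to irreducible words and the connectivity of the graphs $\Gamma_w$ to control degrees. First I would apply Proposition~\ref{prop:factorization} to $w_m$, where $L_m=\Op(w_m)$: modulo $R$ one has $w_m\equiv\sum_i n_i\,w^{(i)}$ with $w^{(i)}=w^{(i)}_1\cdots w^{(i)}_{N_i}\in\sW_m^\fac$, and Lemma~\ref{lem:relations} turns this into an identity
\[
	L_m=\sum_i n_i\,\Op\!\big(w^{(i)}_1\big)\cdots\Op\!\big(w^{(i)}_{N_i}\big)
\]
in $\DD_G(X)$, a $\ZZ$-combination of products of operators attached to irreducible words. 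Since the relations (R1)--(R3) never break a matched pair, every $w^{(i)}$ still has $J_{w^{(i)}}=\{1,\dots,m\}$; writing $q^{(i)}_j:=|J_{w^{(i)}_j}|$ for the pair-counts of the irreducible factors gives $\sum_j q^{(i)}_j=m$. Applying $\gamma$ and expanding each factor in the power-sum basis $\{p_\lambda=p_{\lambda_1}p_{\lambda_2}\cdots\}$ of $\Poly(\fraka_\CC)^W$ (available by Lemma~\ref{lem:generators}) presents $\gamma(L_m)$ as a $\ZZ$-combination of products $p_\lambda$.

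The decisive bound comes from connectivity. For irreducible $w$ the graph $\Gamma_w$ is connected, so $|J_w|\ge\ell(w)-1$; as $\Op(w)$ has order at most $\ell(w)$ and $\gamma$ takes order to degree, $\gamma(\Op(w))$ has degree at most $|J_w|+1$. Hence every $p_\mu$ occurring in $\gamma(\Op(w))$ has total weight $\sum_{c\in\mu}c=\tfrac12\deg p_\mu\le\tfrac12(|J_w|+1)$, so in particular each part $c$ satisfies $|J_w|\ge 2c-1$. For a factorized word the partition $\lambda$ is the union of the factors' partitions $\mu_j$, so a part $c$ of $\lambda$ can arise only from a factor with at least $2c-1$ pairs. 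If $m=2k$, a part $c=k+1$ would require $2k+1>m$ pairs, which is impossible; thus all parts of every $\lambda$ are $\le k$, so $L_{2k}$ lies in the subalgebra generated by $D_1,\dots,D_k$ and, by Lemma~\ref{lem:lowdegreeoperatros}, is a unique polynomial in them. If $m=2k+1$, a part $c=k+1$ forces a single factor carrying all $m=2k+1$ pairs, hence $N_i=1$, and the weight bound $\sum_{c\in\mu}c\le k+1$ then forces $\mu=(k+1)$; a part $c\ge k+2$ would need $\ge 2k+3>m$ pairs and cannot occur. Therefore $\gamma(L_{2k+1})=c_k\,p_{k+1}+(\text{power sums with all parts}\le k)$, which is exactly $L_{2k+1}=c_k D_{k+1}+\text{poly}(D_1,\dots,D_k)$.

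It remains to prove $c_k\ne0$, which I expect to be the main obstacle. By the analysis above the only words contributing to the coefficient of $p_{k+1}$ are the irreducible $w^{(i)}$ with $|J_{w^{(i)}}|=m$ and $\ell(w^{(i)})=2k+2$; these are precisely the tree words ($2k+2$ vertices, $2k+1$ edges), for which $p_{k+1}$ is the top-degree term, so $c_k$ is computable from principal symbols alone. Concretely $c_k=\sum_{w}n_w\,\sigma(w)$, where $w$ ranges over the length-$(2k+2)$ tree words, $n_w$ is the multiplicity with which $w$ survives the reduction of Proposition~\ref{prop:factorization}, and $\sigma(w)$ is the coefficient of $p_{k+1}=\sum_j\zeta_j^{2k+2}$ in the restriction to $\fraka_\CC$ of the principal symbol of $\Op(w)$. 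To evaluate $\sigma(w)$ I would use the Cartan basis of Lemma~\ref{lem:CartanBasis}: writing $A_j=E_j+F_j$ and invoking the relations \eqref{eq:commutators}, the nested brackets produced by a tree word telescope along its edges, and the projection onto $S\fraka_\CC$ collapses all summation indices onto a single triple $(E_j,H_j,F_j)$, producing $\zeta_j^{2k+2}$ weighted by a power of the structure constant $c_0$ of \eqref{eq:structureconstant}.

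The genuinely hard point is then to show that the signed sum $\sum_w n_w\,\sigma(w)$ does not vanish: its signs are dictated by the commutation relation (R2), and the multiplicities $n_w$ must be tracked through the entire length-reducing reduction. I anticipate that this reduces to a nonzero combinatorial count of admissible tree words and is the technically decisive step of the proof; note that a purely soft (say homological) argument seems unavailable, since asserting $c_k\ne0$ for all $k$ a priori is equivalent to the generation statement we are trying to establish.
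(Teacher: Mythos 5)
Your reduction is essentially the paper's: the factorization of Proposition~\ref{prop:factorization}, the connectivity bound $\ell(w)\le|J_w|+1$ for irreducible words, and the parity/weight count that isolates tree words with $m$ edges as the only possible sources of $p_{k+1}$ faithfully reproduce Propositions~\ref{prop:wordoppoly} and~\ref{prop:topdegreeterm}. But the argument stops exactly where the theorem is actually proved: you reduce everything to $c_k=\sum_w n_w\,\sigma(w)\neq 0$ and then only ``anticipate'' that this signed sum is nonzero. That is a genuine gap, not a technicality --- as you yourself observe, asserting $c_k\neq 0$ for all $k$ a priori is equivalent to the generation statement, and relation (R2) does introduce minus signs, so cancellation has to be ruled out by an actual computation.

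The paper closes this gap in two steps, both absent from your proposal. First, Proposition~\ref{prop:topdegreeterm} shows that $\sigma(w)$ is not merely ``a power of the structure constant'' but equals the \emph{same} constant $c_0$ for \emph{every} tree word, independently of the shape of the tree; this rests on the telescoping identity $[[E_j,F_k],E_\ell]=\delta_{jk}\delta_{k\ell}E_j$ from Lemma~\ref{lem:CartanBasis} and reduces the problem to $\sum_w n_w\neq 0$. Second, the paper carries out the reduction of $w_m$ explicitly modulo $R'=R+\ZZ\sW_m'$, discarding words with cycles and products of words (justified by Corollary~\ref{cor:omittingwords}), commuting $y_1,y_2,\ldots$ to the right one at a time. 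Every surviving tree word then appears with the coefficient $(-1)^k\tbinom{m-1}{2}\tbinom{m-3}{2}\cdots\tbinom{2}{2}$, i.e., all multiplicities $n_w$ carry the \emph{same} sign, so no cancellation occurs and $c_k=c_0\,\tfrac{(2k)!}{(-2)^k}\tbinom{2k}{k}\neq 0$. Without this uniform-sign bookkeeping (or some other concrete evaluation of the signed sum), your proof is incomplete at its decisive point.
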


Our main Theorem~\ref{thm:mainthm} is an immediate consequence of Theorem~\ref{thm:higherlaplacians}, since the relation between $L_1, L_3,\ldots, L_{2r-1}$ and $D_1,D_2,\ldots, D_r$ can easily be inverted. Therefore, $L_1,L_3,\ldots,L_{2r-1}$ form a set of algebraically independent generators for $\DD_G(X)$. For $m$ even, Theorem~\ref{thm:higherlaplacians} also shows that $L_m$ is a polynomial in $L_1,L_3,\ldots, L_{m-1}$.\\

The rest of this section is devoted to the proof of Theorem~\ref{thm:higherlaplacians}. For $m\geq 2r$, this statement is obvious, since $D_1,\ldots,D_r$ are generators for $\DD_G(X)$. So let $m<2r$. We first consider the operator $\Op(w)$ associated to an arbitrary word $w\in\sW_m$, and later specialize to the case $w_m:=y_1\ldots y_mx_1\cdots x_m$ with $\Op(w_m)=L_m$. Recall that there is a graph $\Gamma_w$ attached to each word $w\in\sW_m$, and that a graph is called a \emph{tree}, if it is connected and contains no cycles, so in particular no loops.

\begin{proposition}\label{prop:wordoppoly}
	Let $m\leq 2r$ and $w\in\sW_m^\fac$ be a factorized word. Then,
	\[
		\Op(w) = d_w\,D_{h+1} + \text{polynomial in $D_1,\ldots,D_h$, where }
		h:=\left\lfloor\tfrac{m}{2}\right\rfloor.
	\]
	Moreover, $d_w\neq 0$ if and only if $m=2h+1$ is odd and $\Gamma_w$ is a tree with $m$ edges.
	In this case, $d_w=c_0$ is the constant defined by \eqref{eq:structureconstant}.
\end{proposition}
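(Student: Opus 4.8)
The plan is to reduce to irreducible words via the multiplicativity $\Op(w)=\Op(w_1)\cdots\Op(w_N)$, prove the statement for each irreducible factor, and then recombine through the Harish-Chandra isomorphism. Write the factorization $w=w_1\cdots w_N$ into irreducible words, set $m_i:=|J_{w_i}|$ for the number of edges of the connected graph $\Gamma_{w_i}$, and $h_i:=\lfloor m_i/2\rfloor$, so that $\sum_i m_i=|J_w|\le m$. Since the $w_i$ involve pairwise disjoint symbols, each $\Op(w_i)\in\DD_G(X)$, and $\gamma(\Op(w))=\prod_i\gamma(\Op(w_i))$. I would prove for each factor that $\Op(w_i)=d_{w_i}D_{h_i+1}+(\text{polynomial in }D_1,\dots,D_{h_i})$, with $d_{w_i}\neq0$ precisely when $w_i$ is a tree (which forces $m_i$ odd), and then $d_{w_i}=c_0$.

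For a non-tree irreducible factor the claim is a pure degree count. As $\Gamma_{w_i}$ is connected with a cycle, its number of vertices satisfies $\ell(w_i)\le m_i$; since $\Op(w_i)$ lies in the $\ell(w_i)$-th step of the PBW filtration, the operator has order at most $\ell(w_i)\le m_i$, and by Lemma~\ref{lem:lowdegreeoperatros} this order is even, hence bounded by $2\lfloor m_i/2\rfloor=2h_i$. The same lemma then writes $\Op(w_i)$ as a polynomial in $D_1,\dots,D_{h_i}$, so $d_{w_i}=0$.

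The heart of the argument is the tree case. Here $\Gamma_{w_i}$ has $m_i$ edges and $\ell(w_i)=m_i+1$ vertices; as every letter has odd length, hence odd degree, $m_i$ is odd and $\ell(w_i)=2(h_i+1)$. The order is again at most $\ell(w_i)=2(h_i+1)$, so Lemma~\ref{lem:lowdegreeoperatros} gives $\Op(w_i)=P(D_1,\dots,D_{h_i+1})$ with $P$ at most linear in the top variable, and $d_{w_i}$ is the coefficient of $p_{h_i+1}$ in the top-degree part of $\gamma(\Op(w_i))$, which by construction of the Harish-Chandra isomorphism is the principal symbol of $\Op(w_i)$ restricted to $\fraka_\CC$. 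I would compute this symbol directly: replacing the product in $\sU(\frakg)$ by the commutative product in $S\frakq_\CC$ and evaluating at $\zeta=\sum_j\zeta_jA_j$ gives $\sum_\alpha\prod_\lambda(\Op'(\lambda)\mid\zeta)$, where $\Op'(\lambda)\in\frakq_{\sgn(\lambda)}$ is the element from the proof of Lemma~\ref{lem:relations}. Carrying out the sum over the dual bases $(X_\alpha),(Y_\alpha)$ substitutes, edge by edge (peeling leaves of the tree), the basis vectors by the elements $E_j,F_j$ of Lemma~\ref{lem:CartanBasis}, turning each letter into an iterated bracket of $E_j$'s and $F_j$'s. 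The strongly orthogonal relations $[E_j,F_k]=\delta_{jk}H_j$, $[H_j,E_j]=E_j$, $[H_j,F_j]=-F_j$ together with $[\frakq_+,\frakq_+]=[\frakq_-,\frakq_-]=0$ then collapse every bracket to a Kronecker delta identifying the two endpoints of each edge; since the graph is connected, all letter-indices are forced equal, and exactly one surviving pairing contributes the factor $(E_j\mid F_j)=c_0$ from \eqref{eq:structureconstant}. The outcome is precisely $c_0\sum_j\zeta_j^{2(h_i+1)}=c_0\,p_{h_i+1}$, with no lower power-sum products, so $d_{w_i}=c_0\neq0$.

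Finally I would recombine. Multiplicativity of $\gamma$ shows $\gamma(\Op(w))$ is a polynomial in the $p_k$ whose largest index is $\max_i(h_i+1)$ taken over the tree factors. A short parity count using $\sum_i m_i\le m$ and the oddness of $m_i$ for tree factors shows $h_i+1\le h$ unless $N=1$ and $w_1=w$ is a tree with $m_i=m$ edges: indeed $(m_i+1)/2=h+1$ forces $m_i=2\lfloor m/2\rfloor+1$, which is $\le|J_w|\le m$ only for $m$ odd and $m_i=m$, hence $N=1$. In that exceptional case $\gamma(\Op(w))$ contains $p_{h+1}$ linearly with coefficient $c_0$ and is otherwise a polynomial in $p_1,\dots,p_h$; in all remaining cases only $p_1,\dots,p_h$ occur. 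Transporting this back through $\gamma$ and Lemma~\ref{lem:lowdegreeoperatros} yields the asserted form with $d_w=c_0$ exactly for trees with $m$ edges and $d_w=0$ otherwise. The main obstacle is the symbol collapse in the tree case: one must check that the dual-basis summation really reduces, through arbitrarily nested brackets, to the $\mathfrak{sl}_2$-contractions above, that connectedness leaves no disconnected (product) contributions in $\fraka_\CC$, and that the surviving constant is exactly $c_0$ rather than a power of it.
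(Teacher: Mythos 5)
Your proposal is correct and follows essentially the same route as the paper: factor into irreducible words, bound the order of each factor by the vertex count of its connected graph so that Lemma~\ref{lem:lowdegreeoperatros} applies, observe that the extremal case forces $m$ odd and a single tree factor with $m$ edges, and compute the top-degree term of the Harish-Chandra image in that case via the strongly orthogonal basis of Lemma~\ref{lem:CartanBasis}. The symbol collapse you flag as the main obstacle is precisely the content of the paper's Proposition~\ref{prop:topdegreeterm}, proved there by the same leaf-by-leaf edge contraction and the reduction $[[E_j,F_k],E_\ell]=\delta_{jk}\delta_{k\ell}E_j$ that you describe, yielding the single surviving factor $(E_j|F_j)=c_0$.
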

\begin{proof}
Let $w=w_1\cdots w_t$ denote the decomposition of $w$ into irreducible words $w_i\in\sW_m$, and let $\Gamma_i=(\sV_i,\sE_i,\epsilon_i)$ be the graph associated to $w_i$, so $\Gamma_w=\bigcup_{i=1}^t\Gamma_i$. Recall that the number of vertices of the graph $\Gamma_i$ coincides with the length of $w_i$, hence the order of the differential operator $\Op(w_i)$ is bounded by $|\sV_i|$, which is an even number. Due to Lemma~\ref{lem:lowdegreeoperatros}, this implies that $\Op(w_i)$ is a polynomial in $D_1,\ldots, D_{h_i}$ with $h_i:=|\sV_i|/2$. By definition of irreducibility, $\Gamma_i$ is connected, and in this case elementary graph theory provides the estimate
\begin{align}\label{eq:graphestimate}
	|\sV_i|\leq|\sE_i|+1.
\end{align}
We thus obtain
\begin{align}\label{eq:inequalities}
	h_i\leq\frac{|\sE_i|+1}{2}\leq\frac{m+1}{2}\leq h+1.
\end{align}
Since $\Op(w)=\Op(w_1)\cdots\Op(w_t)$, it remains to investigate the case where equality holds in this chain of estimates for some $i$. The last inequality is strict if and only if $m$ is even. So we may assume that $m=2h+1$ is odd. The second estimate is an equality if and only if $|\sE_i|=m$. In this case, $w=w_i$, since each of the graphs $\Gamma_1,\ldots,\Gamma_t$ contains at least one edge, and $\sum_{j=1}^t|\sE_j|\leq m$. We thus may assume that $w$ is irreducible and the associated graph $\Gamma_w$ contains $m$ edges. It is a standard fact from graph theory, that \eqref{eq:graphestimate} is in fact an equality if and only if the graph is a tree. It therefore remains to show that 
\[
	\Op(w) = c_0\cdot D_{h+1} + \text{polynomial in $D_1,\ldots,D_h$}
\]
for any word $w\in\sW_m$ such that the associated graph $\Gamma_m$ is a tree with $m$ edges. This follows from the subsequent proposition, which computes the top degree term of $\gamma(\Op(w))$. It turns out that this term coincides with $c_0\cdot\gamma(D_{h+1})$, hence $\Op(w)-c_0\cdot D_{h+1}$ is a differential operator of order less or equal to $h$.
\end{proof}

\begin{proposition}\label{prop:topdegreeterm}
	Let $w\in\sW_m$ be irreducible and such that the associated graph $\Gamma_w$ is a tree. Then, 
	\[
		\gamma(\Op(w)) = c_0\cdot\gamma(D_{\ell(w)/2}) + \text{terms of lower degree,}
	\]
	where $\gamma$ is the Harish-Chandra isomorphism \eqref{eq:CHiso}, and $c_0$ is the constant 
	defined by \eqref{eq:structureconstant}.
\end{proposition}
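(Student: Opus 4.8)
The plan is to extract directly the degree-$\ell(w)$ part of $\gamma(\Op(w))$, since everything of lower order may be discarded. Write $L:=\ell(w)$ and let $\ell_i\in\frakq_{\sgn(\lambda_i)}$ denote the element obtained by evaluating the $i$-th letter $\lambda_i$ of $w=\lambda_1\cdots\lambda_L$ on the basis vectors, so that $\Op(w)=\tfrac{1}{n^{m-|J_w|}}\sum_\alpha\ell_1\cdots\ell_L$. As each $\ell_i$ has degree one in $\sU(\frakg)$, the principal symbol of $\Op(w)$ is the symmetric product $\tfrac{1}{n^{m-|J_w|}}\sum_\alpha\ell_1\cdots\ell_L\in S^L\frakg_\CC$. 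First I would observe that the conjugation by $e^{\pm\rho}$ in the Harish--Chandra isomorphism \eqref{eq:secondisomorphism} is a filtered automorphism that is the identity on the associated graded, so it preserves top degree; consequently the degree-$L$ part of $\gamma(\Op(w))$ is the image of this symbol under the map $S(\pi_\fraka)\colon S\frakg_\CC\to S\fraka_\CC$ induced by the Iwasawa projection $\pi_\fraka\colon\frakg_\CC\to\fraka_\CC$ along $\frakn_\CC\oplus\frakh_\CC$.

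Next I would make $\pi_\fraka$ explicit by identifying $\frakn_\CC\oplus\frakh_\CC$ with the orthogonal complement $\fraka_\CC^\perp$ relative to the form \eqref{eq:metric}: the nonzero restricted-root spaces are orthogonal to the weight-zero space $\fraka_\CC$, the centralizer $Z_{\frakh_\CC}(\fraka_\CC)$ is orthogonal to $\fraka_\CC$ because $\sigma$ is an isometry that pairs $\frakh$ with $\frakh$ and $\frakq$ with $\frakq$, and both subspaces have codimension $\dim\fraka_\CC$. Hence $\pi_\fraka$ is the orthogonal projection, $\pi_\fraka(Z)=\sum_j\tfrac{(Z\,|\,A_j)}{2c_0}A_j$, where $(Z\,|\,A_j)=(Z\,|\,F_j)$ for $Z\in\frakq_+$ and $(Z\,|\,A_j)=(Z\,|\,E_j)$ for $Z\in\frakq_-$. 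Substituting this into the symbol, using $A_j\leftrightarrow 2c_0\,\zeta_j$ under the identification $S\fraka_\CC\cong\Poly(\fraka_\CC)$ (so that the powers of $2c_0$ cancel) and noting that the $n^{m-|J_w|}$ summations over symbols absent from $w$ cancel the normalization, I obtain that the degree-$L$ part of $\gamma(\Op(w))$ equals $\sum_{j_1,\ldots,j_L}\tilde C_{\vec{j}}\,\zeta_{j_1}\cdots\zeta_{j_L}$, where $\tilde C_{\vec{j}}=\sum_{(\alpha_e)_{e\in J_w}}\prod_{i=1}^L(\ell_i\,|\,A_{j_i})$ is a contraction over the edge-indices of $\Gamma_w$.

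The heart of the argument is the evaluation of $\tilde C_{\vec{j}}$, and this is where the tree hypothesis is used. Since every leaf of $\Gamma_w$ is a single-symbol letter, I would strip leaves one at a time: contracting the edge at a leaf by means of the identities $\sum_\alpha(X_\alpha\,|\,F)\,Y_\alpha=F$ and $\sum_\alpha(Y_\alpha\,|\,E)\,X_\alpha=E$ simply substitutes $F_{j}$ or $E_{j}$ for the matching symbol in the adjacent letter, $j$ being the index carried by the removed leaf. Iterating and repeatedly applying \eqref{eq:commutators} together with its consequences $[E_j,F_k]=\delta_{jk}H_j$, $[H_j,E_k]=\delta_{jk}E_k$ and $[H_j,F_k]=-\delta_{jk}F_k$ (strong orthogonality), every bracket yields a Kronecker delta identifying two indices. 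As $\Gamma_w$ is connected, these deltas chain $j_1,\ldots,j_L$ into a single index; and since each relation and the pairing carry coefficient one, exactly one factor $(E_j\,|\,F_j)=c_0$ survives from the unique letter-pairing not consumed by an edge-contraction. Thus $\tilde C_{\vec{j}}=c_0$ when $j_1=\cdots=j_L$ and $0$ otherwise, whence the degree-$L$ part is $c_0\sum_j\zeta_j^L=c_0\,p_{L/2}(\zeta)=c_0\,\gamma(D_{L/2})$, as claimed.

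The hard part is the bookkeeping in this last step for the composite letters $\TP{\,\cdot\,}{\cdot}{\cdot}$: I would need to verify that, after the substitutions coming from stripped leaves, the nested triple products collapse under \eqref{eq:commutators} to a single surviving $c_0$ with no spurious combinatorial multiplicity, and that the deltas produced are exactly enough to force global equality of the indices for an arbitrary tree. I expect to organize this as an induction on $L$ along the leaf-stripping, carrying at each stage a partially substituted letter together with the index constraints accumulated so far; the connectedness and the absence of cycles are precisely what guarantee that one pairing, and only one, is left uncontracted.
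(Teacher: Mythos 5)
Your argument is essentially the paper's own proof: both reduce the statement to the top-degree term of the orthogonal $\fraka_\CC$-projection of the symbol of $\Op(w)$, contract the edges of the tree $\Gamma_w$ one at a time via the dual-basis identity, and collapse the resulting nested triple products using the strong-orthogonality relations of Lemma~\ref{lem:CartanBasis} to arrive at $c_0\,(\zeta_1^{\ell(w)}+\cdots+\zeta_r^{\ell(w)})=c_0\,\gamma(D_{\ell(w)/2})$. The ``hard part'' you flag --- moving the shared basis vector out of a composite letter so that the corresponding edge can be contracted --- is exactly the step the paper handles by the associativity of the form \eqref{eq:metric} together with the recursive definition of letters, and it is likewise left there as a routine verification.
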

\begin{proof}
For $X\in\sU(\frakg)$, let $X_\fraka$ denote the $S\fraka_\CC$-part of $X$ according to the projection \eqref{eq:aprojection}, and let $X_\fraka^\topterm$ denote the top degree term of $X_\fraka$. Then, $\gamma(X)^\topterm = X_\fraka^\topterm$, since the $\rho$-shift in \eqref{eq:secondisomorphism} does not affect the top term. We next determine a formula for $X_\fraka^\topterm$. Since the Iwasawa decomposition of $\frakg_\CC$ into the sum of $\fraka_\CC$ and $\frakn_\CC\oplus\frakh_\CC$ is orthogonal with respect to the Killing form, and hence also orthogonal with respect to \eqref{eq:metric}, it follows that
\[
	X_\fraka = \sum_{j=1}^r (X|A_j')\,A_j\quad\text{for}\quad X\in\frakg,
\]
where $(A_j)_{j=1,\ldots,r}$ is the basis of $\fraka_\CC$ determined by Lemma~\ref{lem:CartanBasis}, and $(A'_j)_{j=1,\ldots,r}$ is the dual basis with respect to \eqref{eq:metric}. Now consider $X=X_1\cdots X_r\in\sU(\frakg)$ with $X_j\in\frakg$. Decomposing each $X_j$ into $X_j=X_{j,\frakn}+X_{j,\fraka}+X_{j,\frakh}$ according to $\frakg =\frakn_\CC\oplus\fraka_\CC\oplus\frakh_\CC$, yields
\[
	X_\fraka = X_{1,\fraka}\cdots X_{r,\fraka} + \text{terms of lower degree.}
\]
If $X_{j,\fraka}$ is non-zero for all $j$, it follows that
\[
	X_\fraka^\topterm = X_{1,\fraka}\cdots X_{r,\fraka}.
\]
Identifying $S\fraka_\CC$ with $\Poly(\fraka_\CC)$ via \eqref{eq:metric}, this corresponds to the polynomial
\[
	X_\fraka^\topterm(\zeta) = \prod_{j=1}^r(X_j|\zeta).
\]
Applying this formula to the operator $\Op(w)$, we obtain
\begin{align}\label{eq:graphsymbolformula}
	\Op(w)_\fraka^\topterm(\zeta)
		= \sum_{\alpha_j:j\in\sE_w}\prod_{\lambda\in\sV_w}
			(\lambda(X_{\alpha_1},\ldots,X_{\alpha_m},Y_{\alpha_1},\ldots,Y_{\alpha_m})|\zeta).
\end{align}
Here, the sum involves all indices $\alpha_j$ that correspond to edges in $\sE_w$, and $\alpha_j$ takes the values $1,\ldots, n$. We thus avoid the factor in the original definition of $\Op(w)$ which corrected some overcounting.\\
We will need two steps to evaluate formula \eqref{eq:graphsymbolformula}. In the first one, we show that 
\begin{align}\label{eq:firststep}
	\Op(w)_\fraka^\topterm(\zeta) = (\mu(\zeta^+,\ldots,\zeta^+,\zeta^-\ldots,\zeta^-)|\zeta)
\end{align}
for some letter $\mu\in\sL$ depending on $\ell(w)-1$ of the symbols $x_1,\ldots,y_m$, and where $\zeta=\zeta^++\zeta^-$ according to the decomposition $\fraka_\CC\subseteq\frakq_+\oplus\frakq_-$. The second step is to prove that the right hand side of \eqref{eq:firststep} coincides with $\gamma(D_{\ell(w)/2})$.\\
\emph{First step.} In order to prove \eqref{eq:firststep}, we extend the set of symbols $\sS$ to
\[
	\sS':=\sS_+'\cup\sS_-'\quad\text{with}\quad
	\sS'_+:=\{x_1,\ldots,x_m,\zeta^+\},\quad
	\sS'_-:=\{y_1,\ldots,y_m,\zeta^-\},
\]
and consider letters based on $\sS'$ which are defined in the same way as before with the exception that the additional symbols $\zeta^+$ and $\zeta^-$ may occur arbitrarily often. In addition, we demand that any letter contains at least one of the former symbols $\sS$. For $\lambda\in\sL'$, let $d_\zeta(\lambda)$ denote the number of $\zeta^+$ and $\zeta^-$ contained in $\lambda$. The set $\sW'_m$ of words and the associated graphs are defined as before. In this setting, we claim: If $w\in\sW'_m$ is an irreducible word whose graph $\Gamma_w$ is a tree, then
\begin{align}\label{eq:modifiedfirststep}
	\sum_{\alpha_j:j\in\sE_w}\prod_{\lambda\in\sV_w}
			(\lambda(X_{\alpha_1},\ldots,Y_{\alpha_m})|\zeta)
	= (\mu(\zeta^+,\ldots,\zeta^-)|\zeta)
\end{align}
for some letter $\mu\in\sL'$ with $|\mu|=\ell(w)-1+\sum_{\lambda\in\sV_w}d_\zeta(\lambda)$. In particular, this proves \eqref{eq:firststep}.\\
Fix an edge $j\in\sE_w$, and let $\lambda$, $\lambda'$ be the letters connected by $j$. Since $\Gamma_w$ is a tree, $\lambda\neq\lambda'$, and we may assume that $\lambda$ contains $x_j$, and $\lambda'$ contains $y_j$. We aim to evaluate the sum over $\alpha_j$ in \eqref{eq:modifiedfirststep}. In order to simplify notation, we assume that $\lambda$ has negative sign, the argument for positive sign follows in the same way. Since $(\frakq_-|\frakq_-)=0$, this assumption yields
\[
	(\lambda(X_{\alpha_1},\ldots Y_{\alpha_m})|\zeta)
		=(\lambda(X_{\alpha_1},\ldots Y_{\alpha_m})|\zeta^+).
\]
Then, there exists a letter $\mu'\in\sL'_-$ not containing the symbols $x_j$ and $y_j$, such that
\[
	(\lambda(X_{\alpha_1},\ldots,Y_{\alpha_m})|\zeta_+) = 
		(\mu'(X_{\alpha_1}\ldots,Y_{\alpha_m})|X_{\alpha_j}),
\]
i.e., we may interchange $X_{\alpha_j}$ and $\zeta^+$ at the cost of replacing $\lambda$ by a new letter $\mu'$, which involves $\zeta^+$ instead of $x_j$. This can be proved in a straightforward way by means of the recursive definition of letters and using the associativity of \eqref{eq:metric}. We omit the details. Applying the standard identity $\sum_{\alpha_j}(X|X_{\alpha_j})Y_{\alpha_j}=X$, we conclude that
\[
	\sum_{\alpha_j}(\lambda(X_{\alpha_1},\ldots,Y_{\alpha_m})|\zeta)
		(\lambda'(X_{\alpha_1},\ldots,Y_{\alpha_m})|\zeta)
	= (\lambda''(X_{\alpha_1},\ldots,Y_{\alpha_m})|\zeta),
\]
where $\lambda''$ is obtained from $\lambda'$ by replacing $y_j$ by $\mu'$. If $|\sE_w|=1$, this already proves \eqref{eq:modifiedfirststep} with $\mu=\lambda''$. For $|\sE_w|>1$, consider the word $w'$ obtained from $w$ by omitting $\lambda$ and replacing $\lambda'$ by $\lambda''$. It follows that the graph $\Gamma_{w'}$ is obtained from $\Gamma_w$ by identifying the vertices $\lambda$ and $\lambda'$ along the edge $j$, and then omitting this edge. In particular, $\Gamma_{w'}$ is a tree with less edges than $\Gamma_w$. Moreover, by construction, the left hand side of \eqref{eq:modifiedfirststep} remains the same, when $\sE_w$ and $\sV_w$ are replaced by $\sE_{w'}$ and $\sV_{w'}$, respectively. Therefore, \eqref{eq:modifiedfirststep} follows by induction on the number of edges.\\
\emph{Second step.} Recall from Lemma~\ref{lem:CartanBasis} the decomposition of the basis elements $A_j$ into $A_j=E_j+F_j$ with $E_j\in\frakq_+$, $F_j\in\frakq_-$. It follows, that 
\begin{align}\label{eq:zetapm}
	\zeta^+ = \sum_{j=1}^r\zeta_j E_j,\quad \zeta^- =\sum_{j=1}^r\zeta_j F_j,
\end{align}
where $\zeta_j=(\zeta|A_j')$, i.e., $\zeta = \sum_{j=1}^r\zeta_j A_j$. Let $\mu\in\sL_+$ by any letter of positive sign depending on $d$ symbols. We claim, that
\begin{align}\label{eq:muformula}
	\mu(\zeta^+,\ldots,\zeta^-) = \sum_{j=1}^r\zeta_j^d\,E_j,
\end{align}
and the same holds for $\mu\in\sL_-$ with $E_j$ replaced by $F_j$.\\
The proof goes by induction on $d$. For $d=1$, we have $\mu=x_1$ or $\mu=y_1$, and \eqref{eq:muformula} reduces to \eqref{eq:zetapm}. Now let $d>1$. We may assume $\mu$ has positive sign, the argument for negative sign is the same. Then, $\mu=\TP{\mu_1}{\mu_2}{\mu_3}$ for some letters $\mu_1,\mu_3\in\sL_+$, $\mu_2\in\sL_-$ depending on $d_1,d_2,d_3>0$ symbols, respectively, with $d_1+d_2+d_3=d$. Applying the induction hypothesis yields
\begin{align*}
	\mu(\zeta^+,\ldots,\zeta^-)
	&= [[\mu_1(\zeta^+,\ldots,\zeta^-),\mu_2(\zeta^+,\ldots,\zeta^-)],
		\mu_3(\zeta^+,\ldots,\zeta^-)]\\
	&= \sum_{j,k,\ell=1}^r\zeta_j^{d_1}\zeta_k^{d_2}\zeta_\ell^{d_3}[[E_j,F_k],E_\ell]\\
	&= \sum_{j=1}^r\zeta_j^{d_1+d_2+d_3} E_j,
\end{align*}
since $[[E_j,F_k],E_\ell] = \delta_{jk}\delta_{k\ell}E_j$ due to the commutation relations given in Lemma~\ref{lem:CartanBasis}. This proves \eqref{eq:muformula}, and it easily follows that
\begin{align}\label{eq:zetaproduct}
	(\mu(\zeta^+,\ldots,\zeta^-)|\zeta)
		= c_0\cdot(\zeta_1^{d+1}+\cdots+\zeta_r^{d+1}),
\end{align}
where $c_0$ is the constant defined by \eqref{eq:structureconstant}. In summary, \eqref{eq:firststep} and \eqref{eq:zetaproduct} show that
\[
	\Op(w)_\fraka^\topterm(\zeta) = c_0\cdot(\zeta_1^{\ell(w)}+\cdots\zeta_r^{\ell(w)})
	= c_0\cdot\gamma(D_{\ell(w)/2}),
\]
which proves Proposition~\ref{prop:topdegreeterm}.
\end{proof}

We continue with the proof of Theorem~\ref{thm:higherlaplacians}, and now specialize to the word $w_m=y_1\cdots y_mx_1\cdots x_m$ with $\Op(w_m)=L_m$. Due to Proposition~\ref{prop:factorization}, $w_m$ admits a representative
\begin{align}\label{eq:factorizedwm}
	w_m \equiv \sum_{w\in\sW_m^\fac} n_w\,w\mod R
\end{align}
with $n_w\in\ZZ$. Applying Proposition~\ref{prop:wordoppoly} to each word $w\in\sW_m^\fac$ shows that $L_{2k}$ is indeed a polynomial in $D_1,\ldots,D_k$, and $L_{2k+1}$ is of the form
\[
	L_{2k+1} = c_k D_{k+1} + \text{polynomial in $D_1,\ldots,D_k$},
\]
where the constant $c_k$ is given by
\[
	c_k = c_0\cdot
		\sum_{\substack{w\in\sW_m^\fac:\ \Gamma_w\text{ tree}\\\text{with $m$ edges}}} n_w.
\]
It remains to show that this constant is non-zero. For this purpose, we describe a procedure which determines a factorized representative \eqref{eq:factorizedwm} for $w_m$. Since we are only interested in terms that contribute to the constant $c_k$, we may omit along the calculation all words, that are products of words, and words whose graph contain cycles. This is justified by the following corollary of Proposition~\ref{prop:wordoppoly}.

\begin{corollary}\label{cor:omittingwords}
	Let $m=2k+1$ be odd, and $w\in\sW_m$ be a word satisfying one of the following conditions:
	\begin{enumerate}[\quad(i)]
		\item $\Gamma_w$ contains a cycle,
		\item $\Gamma_w$ has less than $m$ edges,
		\item $w=w_1w_2$ for words $w_1,w_2\in\sW_m$.
	\end{enumerate}
	Then, $\Op(w)$ is a polynomial in $D_1,\ldots,D_k$. 
\end{corollary}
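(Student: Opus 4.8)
The plan is to obtain the statement by feeding the factorization of Proposition~\ref{prop:factorization} into Proposition~\ref{prop:wordoppoly}, and then tracking two combinatorial invariants of words through the relations that generate $R$. Write $m=2k+1$, so that $\lfloor m/2\rfloor=k$. By Proposition~\ref{prop:factorization} and Lemma~\ref{lem:relations} we may expand $\Op(w)=\sum_{w'\in\sW_m^\fac}n_{w'}\,\Op(w')$ modulo the kernel, and by Proposition~\ref{prop:wordoppoly} a summand $\Op(w')$ contributes a nonzero multiple of $D_{k+1}$ exactly when $\Gamma_{w'}$ is a tree with $m$ edges; otherwise $\Op(w')$ is already a polynomial in $D_1,\ldots,D_k$. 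Hence it suffices to show that, under each of the hypotheses (i)--(iii), no factorized word $w'$ occurring in the expansion of $w$ has $\Gamma_{w'}$ equal to a tree with $m$ edges.

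I would then isolate two quantities attached to a word $v\in\sW_m$ and follow their behaviour under (R1)--(R3): the number of edges $|\sE_v|=|J_v|$, and the circuit rank $b_1(v):=|\sE_v|-|\sV_v|+\kappa(v)$, where $\kappa(v)$ denotes the number of connected components of $\Gamma_v$. The relations (R1) and (R3) leave $\Gamma_v$ unchanged up to isomorphism (the first reorders letters, the second relabels symbols), so both quantities are preserved. For (R2) the key point is that passing to a summand in which the letters $\lambda,\mu,\lambda_j$ are merged into $\TP{\lambda}{\mu}{\lambda_j}$ deletes no symbols: thus $|\sE_v|$ is unchanged, while $|\sV_v|$ drops by $2$ and $\kappa(v)$ drops by $a-1$, where $a\in\{1,2,3\}$ is the number of components met by the three merged letters. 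Consequently $b_1(v)$ changes by $3-a\geq 0$. Therefore, along the rewriting produced by Proposition~\ref{prop:factorization}, the edge number is invariant and the circuit rank is non-decreasing; in particular every factorized $w'$ in the expansion of $w$ satisfies $|\sE_{w'}|=|\sE_w|$ and $b_1(w')\geq b_1(w)$.

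With these invariants the three cases are immediate. For (ii), $|\sE_w|<m$ forces $|\sE_{w'}|<m$ for every $w'$, so no $w'$ can be a tree with $m$ edges, and the $D_{k+1}$-coefficient is an empty sum. For (i), a cycle in $\Gamma_w$ means $b_1(w)\geq 1$, hence $b_1(w')\geq 1$ for all $w'$; since a tree has circuit rank $0$, again no $w'$ is a tree and the $D_{k+1}$-coefficient vanishes. For (iii), write $w=w_1w_2$ with $w_1,w_2\in\sW_m$ both nonempty; then $\Op(w)=\Op(w_1)\Op(w_2)$. As every symbol occurs at most once in $w$, each pair lies entirely in $w_1$ or in $w_2$, so $J_{w_1}$ and $J_{w_2}$ are disjoint and each nonempty, giving $|\sE_{w_i}|<m$ for $i=1,2$. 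Applying the argument of case (ii) to each factor shows $\Op(w_1)$ and $\Op(w_2)$ are polynomials in $D_1,\ldots,D_k$, whence so is their product.

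The step I expect to be the main obstacle is the bookkeeping for (R2): one must verify carefully that the merge $\lambda,\mu,\lambda_j\mapsto\TP{\lambda}{\mu}{\lambda_j}$ keeps $|\sE_v|$ fixed and that the number of connected components cannot drop by more than $2$, so that $b_1(v)$ genuinely never decreases. Everything else is a direct application of Propositions~\ref{prop:factorization} and~\ref{prop:wordoppoly}; the only real content is this monotonicity of the circuit rank under the vertex-merging move, which is precisely what forbids a tree from appearing in the factorization of a word whose graph already contains a cycle.
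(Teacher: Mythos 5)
Your proof is correct and takes essentially the same route as the paper: expand $w$ into factorized words via Proposition~\ref{prop:factorization}, feed the result into Proposition~\ref{prop:wordoppoly}, note that the rewriting preserves the number of edges and cannot destroy cycles, and reduce case (iii) to case (ii) through $\Op(w)=\Op(w_1)\Op(w_2)$. Your circuit-rank monotonicity argument for (R2) is simply a more careful justification of the paper's bare assertion that the merged graphs retain the cycle of $\Gamma_w$.
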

\begin{proof}
Due to the proof of Proposition~\ref{prop:factorization}, a factorized representative for $w$ is obtained by successively permuting the letters of $w$, i.e., applying (R1) and (R2). In this process, words are produced whose graphs have the same number of edges as $\Gamma_w$, and if $\Gamma_w$ contains a cycle, then all other graphs also contain this cycle. Therefore, if $w$ satisfies (i) or (ii), then each word in a factorized representative for $w$ satisfies (i) or (ii). Lemma~\ref{lem:relations} and Proposition~\ref{prop:wordoppoly} now imply that $\Op(w)$ is a polynomial in $D_1,\ldots,D_k$. If $w=w_1w_2$ is a product, we have $\Op(w)=\Op(w_1)\Op(w_2)$, and since $w_1$ and $w_2$ are words whose graphs have less than $m$ edges, it follows form (ii) that also $\Op(w)$ is a polynomial in $D_1,\ldots, D_k$.
\end{proof}

\noindent
\emph{Factorized representative of $w_m$ modulo $R'$.}
We now determine a factorized representative of $w_m$ modulo words that do not contribute to the constant $c_k$. Formally, we set $R':=R+\ZZ\sW_m'$ with
\[
	\sW_m':=\set{w\in\sW_m}
	{w=w_1w_2\text{ for }w_1,w_2\in\sW_m,\text{ or $\Gamma_w$ contains a cycle}},
\]
and write $w_1\equiv'w_2$ to indicate equality of $w_1$ and $w_2$ modulo $R'$. As a first step, we apply (R1), (R2), and obtain 
\begin{align*}
	w_m&\equiv' y_m\cdots y_1x_m\cdots x_1 \\
		&\equiv' y_m\cdots y_2x_my_1x_{m-1}\cdots x_1
			- \sum_{j=2}^{m-1}y_m\cdots y_2x_{m-1}\cdots\TP{x_m}{y_1}{x_j}\cdots x_1.
\end{align*}
Here, in contrast to (R2), we omit the last term involving $\TP{x_m}{y_1}{x_1}$, since this is a word containing a loop, and hence belongs to $R'$. Renaming the indices by means of (R3) and reordering the letters with (R1) shows that
\begin{align*}
	w_m\equiv' y_m\cdots y_2x_my_1x_{m-1}\cdots x_1
			- (m-2)\,y_m\cdots y_2x_m\cdots x_4\TP{x_3}{y_1}{x_2}x_1.
\end{align*}
Repeating this process successively, we commute $y_1$ in the first term also with $x_{m-1}$, $x_{m-2}$ etc., and obtain
\begin{align*}
	w_m\equiv'-\tbinom{m-1}{2}\,y_m\cdots y_2x_m\cdots x_4\TP{x_3}{y_1}{x_2}x_1.
\end{align*}
The same procedure can be used to commute $y_2$ to the right. The result is
\begin{align}\label{eq:calulation}\begin{aligned}
	w_m&\equiv'-\tbinom{m-1}{2}\,y_m\cdots y_3x_m\cdots x_4\TP{x_3}{y_1}{x_2}x_1y_2\\
	&\qquad+\tbinom{m-1}{2}\tbinom{m-3}{2}\,y_m\cdots y_3x_m\cdots
		 x_6\TP{x_5}{y_2}{x_4}\TP{x_3}{y_1}{x_2}x_1.\end{aligned}
\end{align}
We remark that words involving $\TP{x_j}{y_2}{\TP{x_3}{y_1}{x_2}}$ or $\TP{x_j}{y_2}{x_1}$ which appear modulo $R$ do not occur modulo $R'$, since there are loops or cycles contained in the associated graphs. The next step is to commute $y_3$ to the right. For the first word in \eqref{eq:calulation}, this yields
\begin{align}\begin{aligned}\label{eq:calculation2}
	y_m\cdots &y_3x_m\cdots x_4\TP{x_3}{y_1}{x_2}x_1y_2\\
		&\equiv' -\tbinom{m-3}{2}y_m\cdots y_4x_m\cdots x_6\TP{x_5}{y_3}{x_4}\TP{x_3}{y_1}{x_2}x_1y_2,
	\end{aligned}
\end{align}
and the second word transforms to
\begin{align*}
	y_m\cdots &y_3x_m\cdots x_6\TP{x_5}{y_2}{x_4}\TP{x_3}{y_1}{x_2}x_1\\
		&\equiv' y_m\cdots y_4x_m\cdots x_6\TP{x_5}{y_2}{x_4}\TP{x_3}{y_1}{x_2}x_1y_3\\
			&\quad-\tbinom{m-5}{2}y_m\cdots y_4x_m\cdots
			 x_8y_2\TP{x_7}{y_4}{x_6}\TP{x_5}{y_3}{x_4}\TP{x_3}{y_1}{x_2}x_1.
\end{align*}
All other words occurring modulo $R$ either have cycles contained in the associated graphs, or are products of words in $\sW_m$, and hence belong to $R'$. We thus obtain
\begin{align*}
	w_m&\equiv'\tbinom{m-1}{2}\tbinom{m-3}{2}\,
			y_m\cdots y_4x_m\cdots x_6\TP{x_5}{y_3}{x_4}\TP{x_3}{y_1}{x_2}x_1y_2\\
	&\quad+\tbinom{m-1}{2}\tbinom{m-3}{2}\,
			y_m\cdots y_4x_m\cdots x_6\TP{x_5}{y_2}{x_4}\TP{x_3}{y_1}{x_2}x_1y_3\\
	&\quad-\tbinom{m-1}{2}\tbinom{m-3}{2}\tbinom{m-5}{2}\,
			y_m\cdots y_4x_m\cdots x_8\TP{x_7}{y_4}{x_6}\TP{x_5}{y_3}{x_4}\TP{x_3}{y_1}{x_2}x_1y_2.
\end{align*}
The next step is to commute $y_4$ to the right. The pattern is the following: In each of the words we can decide either to put $y_4$ to the very right position or to pair $y_4$ with two unpaired $x$-symbols, which changes the sign, and reordering the $x$-symbols yields an additional binomial factor. Continuing this process, and omitting products of words (as e.g.\ in \eqref{eq:calculation2}, where we omit the word with $y_3$ put to the very right position), we eventually end up with (recall that $m=2k+1$)
\begin{align*}
	w_m&\equiv' (-1)^k\tbinom{m-1}{2}\tbinom{m-3}{2}\cdots\tbinom{2}{2}\\
		&\quad\cdot
		\sum_{\substack{I\subseteq\{2,\ldots,m\},\\I=\{i_1<\cdots<i_k\}}}
		\TP{x_m}{y_{i_k}}{x_{m-1}}\cdots\TP{x_5}{y_{i_1}}{x_4}\TP{x_3}{y_1}{x_2}x_1
		\prod_{i\notin I} y_i.
\end{align*}
The graph of each word in this sum is a tree with $m$ edges. The factor in front simplifies to $\frac{(m-1)!}{(-2)^k}$, and applying Proposition~\ref{prop:wordoppoly} finally yields that $L_m$ satisfies
\[
	L_{2k+1} = c_kD_{k+1}
	+ \text{polynomial in $D_1,\ldots,D_k$},\quad
	\text{where}\quad c_k:=c_0\,\tfrac{(2k)!}{(-2)^k}\,\tbinom{2k}{k}
\]
with $c_0$ defined by \eqref{eq:structureconstant}. This completes the proof of Theorem~\ref{thm:higherlaplacians}.\qed

%==================================================================================================
\section{Higher Laplacians on semisimple pseudo-Hermitian symmetric spaces}\label{sec:nonIrreducible}

Let $X$ be a semisimple pseudo-Hermitian symmetric space. Then, $X$ decomposes into the product of simple pseudo-Hermitian symmetric spaces,
\begin{align}\label{eq:simpledecomposition}
	X=X_1\times\cdots\times X_N,
\end{align}
and the displacement group of $G$ is the product of the displacement groups $G_k$ of the $X_k$,
\[
	G=G_1\times\cdots\times G_N.
\]
This also coincides with the identity component of the automorphism group $\Aut(X)$ of $X$. Moreover, since the decomposition \eqref{eq:simpledecomposition} coincides with the deRham--Wu decomposition \cite{Wu64} of $X$, it follows from the essential uniqueness of this decomposition that the automorphism group of $X$ is generated by the automorphisms of the components $X_k$ and by permutations of mutually isomorphic components, so
\[
	\Aut(X)=\Aut^0(X)\rtimes\Pi,\quad \Aut^0(X):=\Aut(X_1)\times\cdots\times\Aut(X_N),
\]
where $\Pi$ denotes the group of permutations of mutually isomorphic components in \eqref{eq:simpledecomposition}.

Recall that the higher Laplicians $L_m$ are invariant for the full automorphism group $\Aut(X)$. Therefore, if the higher Laplacians generate the algebra of $G$-invariant differential operators, then any $G$-invariant operator is in fact $\Aut(X)$-invariant. Since there exist $G$-invariant operators acting non-trivially just on a single component of $X$, it follows that the higher Laplacians cannot generate $\DD_G(X)$ if $X$ contains two isomorphic components, see also Remark~\ref{rmk:reducible}. Therefore, it is more appropriate to consider a priori the algebra $\DD_{\Aut(X)}(X)$ of $\Aut(X)$-invariant differential operators, and ask whether the higher Laplacians generate this algebra.

Let $\fraka_k$ be a Cartan subspace for the component $X_k$ of $X$, and let $W_k$ denote the corresponding Weyl group. For the following, we assume that each component has irreducible isotropy representation. Then, our main result shows that $G_k$-invariant differential operators are automatically $\Aut(X_k)$-invariant, and the Harish-Chandra isomorphism yields
\[
	\DD_{\Aut(X_k)}(X_k)\cong\Poly(\fraka_{k,\CC})^{W_k}.
\]
Therefore, the Harish-Chandra isomorphism $\gamma$ identifies the algebra of $\Aut^0(X)$-invariant differential operators with the tensor product
\begin{align}\label{eq:polyalg}
	\Poly(\fraka_{1,\CC})^{W_1}\otimes\cdots\otimes\Poly(\fraka_{r,\CC})^{W_r}
	\cong\Poly(\fraka_\CC)^{W_1\times\cdots\times W_r},
\end{align}
where $\fraka=\fraka_1\oplus\cdots\oplus\fraka_r$ is a Cartan subspace for $X$. Moreover, since $\gamma$ is equivariant for the commutation of mutually isomorphic components, we conclude that 
\begin{align}\label{eq:semisimpleHC}
	\DD_{\Aut(X)}(X)\cong \Poly(\fraka_\CC)^{(W_1\times\cdots\times W_r)\rtimes\Pi}.
\end{align}
Formally, we may set $W:=(W_1\times\cdots\times W_r)\rtimes\Pi$ and call this the Weyl group associated to $\fraka_\CC$. We show that our main Theorem~\ref{thm:mainthm} also applies to the $N$-fold product of a rank-1 pseudo Hermitian symmetric space.

\begin{theorem}
	Let $X=X_1\times\cdots\times X_1$ be the $N$-fold product of a rank-1 pseudo-Hermitian symmetric 
	space $X_1$. Then, the higher Laplacians
	\[
		L_1,L_3,\ldots,L_{2N-1}
	\]
	form a set of algebraically independent generators for $\DD_G(X)$. Moreover, for $m$ even, $L_m$ 
	is a polynomial in $L_1,L_3,\ldots,L_{m-1}$.
\end{theorem}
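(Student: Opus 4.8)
The plan is to transfer the machinery of Sections~\ref{sec:HLonpHSS}--\ref{sec:combinatorialmodel} to the product space, letting the product structure play the role that irreducibility of the isotropy representation played before. Write $\frakg=\frakg_1\oplus\cdots\oplus\frakg_N$ with each $\frakg_k\cong\frakg_1$, so the grading $\frakg=\frakq_-\oplus\frakh_\CC\oplus\frakq_+$ decomposes factorwise, $\frakq_\pm=\bigoplus_k\frakq_\pm^{(k)}$, and choose the Cartan subspace $\fraka=\fraka_1\oplus\cdots\oplus\fraka_N$ with each $\fraka_k$ one-dimensional and contained in the $k$-th factor. First I would verify that Lemma~\ref{lem:CartanBasis} persists in this reducible setting: taking the single-vector Harish--Chandra basis of each $\fraka_k$ produces $A_k=E_k+F_k$ with $E_k\in\frakq_+^{(k)}$, $F_k\in\frakq_-^{(k)}$, and since brackets across distinct simple factors vanish one still has $[E_j,F_k]=0$ for $j\neq k$ together with the relations \eqref{eq:commutators} and, decisively, $[[E_j,F_k],E_\ell]=\delta_{jk}\delta_{k\ell}E_j$. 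Thus the vanishing of cross-factor brackets furnishes exactly the clean commutation relations that irreducibility supplied before, and the representative \eqref{eq:laplacian} of $L_m$ in $\sU(\frakg)$ (hence $L_m=\Op(w_m)$) is unchanged.

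Next I would pin down the invariant algebra through the Harish--Chandra isomorphism. Because each $L_m$ is invariant under all of $\Aut(X)$, in particular under the permutation group $\Pi$ of the identical factors, its image $\gamma(L_m)$ lies in $\Poly(\fraka_\CC)^W$ for the full Weyl group $W=(W_1\times\cdots\times W_N)\rtimes\Pi$ of \eqref{eq:semisimpleHC}. Since each $W_k=\{\pm1\}$, this $W$ is the group of signed permutations of $\zeta_1,\dots,\zeta_N$, i.e.\ precisely the hyperoctahedral Weyl group of type $C_N$/$BC_N$ that already appeared in Lemma~\ref{lem:generators}. Consequently $\Poly(\fraka_\CC)^W$ is freely generated by the power sums $p_k(\zeta)=\zeta_1^{2k}+\cdots+\zeta_N^{2k}$, $k=1,\dots,N$, and the statements of Lemmas~\ref{lem:generators} and~\ref{lem:lowdegreeoperatros} hold verbatim with $r$ replaced by $N$. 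Setting $D_k:=\gamma^{-1}(p_k)$ recovers the same distinguished generators; the essential point is that the extra $\Pi$-symmetry supplies the permutation part of $W$ that the reducible restricted root system alone would miss.

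With these two inputs the combinatorial analysis goes through unchanged. The compatibility Lemma~\ref{lem:relations}, the factorization Proposition~\ref{prop:factorization}, and the relations (R1)--(R3) use only the grading and the abelianness of $\frakq_\pm$, so they apply directly. Propositions~\ref{prop:wordoppoly} and~\ref{prop:topdegreeterm} depend on the underlying space solely through the commutation relations just verified and through the transferred invariant theory; repeating their proofs with $\zeta^+=\sum_j\zeta_jE_j$ and $\zeta^-=\sum_j\zeta_jF_j$ gives, for every tree-word $w$, the top-degree identity $\Op(w)_\fraka^\topterm(\zeta)=c_0\,(\zeta_1^{\ell(w)}+\cdots+\zeta_N^{\ell(w)})=c_0\,\gamma(D_{\ell(w)/2})$, with $c_0$ as in \eqref{eq:structureconstant}. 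Finally, the reduction of $w_m=y_1\cdots y_mx_1\cdots x_m$ to a factorized representative modulo $R'$ is a purely word-combinatorial computation that never refers to the space, so it yields, exactly as in the proof of Theorem~\ref{thm:higherlaplacians},
\[
	L_{2k+1}=c_kD_{k+1}+(\text{polynomial in }D_1,\dots,D_k),\qquad
	L_{2k}=(\text{polynomial in }D_1,\dots,D_k),
\]
with $c_k=c_0\,\tfrac{(2k)!}{(-2)^k}\binom{2k}{k}\neq0$.

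To conclude I would invert this triangular system. For $k=0,1,\dots,N-1$ the nonvanishing of $c_k$ lets me solve successively for $D_1,\dots,D_N$ in terms of $L_1,L_3,\dots,L_{2N-1}$, so the latter are algebraically independent and generate the same algebra as $D_1,\dots,D_N$, namely $\DD_G(X)$ under the Harish--Chandra identification \eqref{eq:semisimpleHC}; and the even Laplacians $L_{2k}$, being polynomials in $D_1,\dots,D_k$, are in turn polynomials in $L_1,L_3,\dots,L_{2k-1}$. The main obstacle is the transfer carried out in the first two paragraphs: one must check that the relation $[[E_j,F_k],E_\ell]=\delta_{jk}\delta_{k\ell}E_j$ genuinely survives the passage to a reducible isotropy representation, and — more delicately — that invoking the permutation symmetry $\Pi$ is both legitimate (the $L_m$ really are $\Aut(X)$-invariant) and exactly what is needed to recover the hyperoctahedral invariant algebra of Lemma~\ref{lem:generators}. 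Once this is secured, every subsequent step is formally identical to the simple case, and the theorem follows.
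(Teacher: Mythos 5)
Your proposal is correct and follows essentially the same route as the paper: the paper likewise observes that simplicity and irreducibility were only needed to make $p_1,\ldots,p_r$ generate the image of the Harish--Chandra map, applies Lemma~\ref{lem:CartanBasis} componentwise (noting the constant $c_0$ is common to all factors since they are isomorphic), checks that $W=(W_1\times\cdots\times W_N)\rtimes\Pi$ acts by signed permutations on $A_1,\ldots,A_N$ so that $p_1,\ldots,p_N$ generate $\Poly(\fraka_\CC)^W$, and then reruns the combinatorial analysis of Section~\ref{sec:combinatorialmodel} verbatim. The one caveat, which you inherit from the paper's own wording, is that what is actually generated is the algebra $\DD_{\Aut(X)}(X)$ of \eqref{eq:semisimpleHC} rather than the strictly larger $\DD_G(X)\cong\Poly(\fraka_\CC)^{W_1\times\cdots\times W_N}$, consistent with the discussion preceding the theorem.
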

\begin{proof}
A review of the proof of Theorem~\ref{thm:mainthm} shows that the assumption that $X$ is simple and has irreducible isotropy representation is only essential in order to show that the polynomials $p_1,\ldots, p_r$ defined in \eqref{eq:polynomials} generate the image of the Harish-Chandra isomorphism. Here, the Harish-Chandra isomorphism is replaced by \eqref{eq:semisimpleHC}. Applying Lemma~\ref{lem:CartanBasis} to each component of $X$, we obtain a basis $A_1,\ldots, A_N$ of $\fraka_\CC$, and since all components are isomorphic, the constant $c_0$ in Lemma~\ref{lem:CartanBasis} is the same for each basis vector. The present assumption on $X$ yields that $W=(W_1\times\cdots\times W_N)\rtimes\Pi$ acts by signed permutation on the basis elements $A_1,\ldots,A_N$, as in the case of a simple pseudo-Hermitian symmetric space of rank $N$ and with irreducible isotropy representation. Therefore, the polynomials $p_1,\ldots,p_r$ defined in \eqref{eq:polynomials} still generate the image of the Harish-Chandra isomorphism, and the assertion on the higher Laplacians follows by the same analysis as in the simple case.
\end{proof}

It would be interesting to clarify the relation between the higher Laplacians $L_m$ and the polynomials $p_m$ more thoroughly. One might conjecture, that the higher Laplacians generate the algebra of $\Aut(X)$-invariant differential operators on $X$ if and only if the polynomials $p_m$ generate the algebra $\Poly(\fraka_\CC)^W$ given by \eqref{eq:semisimpleHC}.

%==================================================================================================
\providecommand{\bysame}{\leavevmode\hbox to3em{\hrulefill}\thinspace}
\providecommand{\MR}{\relax\ifhmode\unskip\space\fi MR }
% \MRhref is called by the amsart/book/proc definition of \MR.
\providecommand{\MRhref}[2]{%
  \href{http://www.ams.org/mathscinet-getitem?mr=#1}{#2}
}
\providecommand{\href}[2]{#2}

\end{document}